\newtheorem{theorem}{Theorem}[section]
\theoremstyle{plain} 
\theoremstyle{plain} 
\newcommand{\thistheoremname}{}
\newtheorem{genericthm}[theorem]{\thistheoremname}
\newtheorem*{genericthm*}{\thistheoremname}
\newenvironment{namedthm*}[1]
{\renewcommand{\thistheoremname}{#1}%
	\begin{genericthm*}}
	{\end{genericthm*}}
\newtheorem{proposition}[theorem]{Proposition}
\newtheorem*{definition*}{Definition}
\newtheorem*{conjecture*}{Conjecture}
\newtheorem*{theorem*}{Theorem}
\newtheorem*{remark*}{Remark}
\numberwithin{equation}{section}
\DeclareMathOperator{\ord}{ord}
\newcommand{\BZ}{\mathbb Z}
\newcommand{\BQ}{\mathbb Q}
\newcommand{\BF}{\mathbb F}
\providecommand{\abs}[1]{\left\lvert#1\right\rvert}
\let\@@pmod\pmod
\DeclareRobustCommand{\pmod}{\@ifstar\@pmods\@@pmod}
\def\@pmods#1{\mkern4mu({\operator@font mod}\mkern 6mu#1)}
\title[A lower bound for the two-variable Artin conjecture]{A lower bound for the two-variable Artin conjecture and prime divisors of recurrence sequences}
\date{\today}
\author{M. Ram Murty}
\address{Department of Mathematics, Queen's University, Kingston, Ontario K7L 3N6, Canada.}
\email{murty@mast.queensu.ca}
\thanks{Research of the first and third author partially supported by an NSERC Discovery grant. Research of the third author supported in part by the Canada Research Chairs Program.}
\author{François Séguin}
\address{Department of Mathematics, Queen's University, Kingston, Ontario K7L 3N6, Canada.}
\email{francois.seguin@queensu.ca} 
\author{Cameron L. Stewart}
\address{Department of Pure Mathematics, University of Waterloo, Waterloo, Ontario N2L 3G1, Canada.}
\email{cstewart@uwaterloo.ca}
\subjclass[2010]{11N69, 11B37, 11D59  } 
\keywords{Artin's conjecture, Recurrence sequences, Thue equation}
\begin{document}

\begin{abstract}
In 1927, Artin conjectured that any integer other than -1 or a perfect square generates the multiplicative group $\BZ/p\BZ^\times$ for infinitely many $p$. In \cite{MoSt}, Moree and Stevenhagen considered a two-variable version of this problem, and proved a positive density result conditionally to the generalized Riemann Hypothesis by adapting a proof by Hooley for the original conjecture (\cite{Ho}). In this article, we prove an unconditional lower bound for this two-variable problem. In particular, we prove an estimate for the number of distinct primes which divide one of the first $N$ terms of a non-degenerate binary recurrence sequence. We also prove a weaker version of the same theorem, and give three proofs that we consider to be of independent interest. The first proof uses a transcendence result of Stewart \cite{Stew}, the second uses a theorem of Bombieri and Schmidt on Thue equations \cite{BoSc} and the third uses Mumford's gap principle for counting points on curves by their height \cite{Mum}. We finally prove a disjunction theorem, where we consider the set of primes satisfying either our two-variable condition or the original condition of Artin's conjecture. We give an unconditional lower bound for the number of such primes.
\end{abstract}

\maketitle
\section{Introduction}
In this article we study the two-variable analogue of Artin's conjecture on primitive roots. Artin's original conjecture suggested that for any integer $a$ other than -1 and perfect squares, there are infinitely many primes $p$ for which $a$ generates the multiplicative group $\BZ/p\BZ^\times$. Specifically, Artin conjectured that the set
$$N_a(X) = \#\left\{p\leq X \text{ prime }: \langle a \bmod{p} \rangle = \BZ/p\BZ^\times\right\}$$
has positive density in the set of all primes.
We can trace the origin of this problem all the way back to Gauss. It was apparently popular at the time to study decimal expansions of certain rational numbers. In his Disquisitiones Arithmeticae, Gauss describes the period of the decimal expansion of $\frac{1}{p}$ in terms of the order of $10 \bmod{p}$. Only other such specific cases of this were considered before 1927, at which time Artin formulated the above conjecture.

As of now, the conjecture is still open. There is actually no $a$ for which we know $N_a$ to be infinite. However, there have been major partial results since, the conditional proof by Hooley \cite{Ho} under the assumption of the generalized Riemann Hypothesis being among the most important, as are the works of Gupta and Murty \cite{GuMu} and Heath-Brown \cite{He}. For example, we know that given three mutually coprime numbers $a,b, c$, there are infinitely many primes $p$ for which at least one of $a,b,c$ is a primitive root mod $p$.

Many variations on Artin's original conjecture have since been studied. Moree and Stevenhagen \cite{MoSt} considered a two-variable variant where the set of interest is
$$S = \left\{p \text{ prime } : b \bmod{p} \in \langle a \bmod{p}\rangle \subseteq \BZ / p\BZ^\times \right\}$$
for given $a$ and $b$. They adapted Hooley's argument, as well as using some work by Stephens (\cite{Step}), to show a positive density result for such primes, conditionally under the generalized Riemann Hypothesis. In this article, we prove an unconditional lower bound on the number of primes in this set. Specifically, we prove the following result.

\begin{theorem}\label{thm2}
Let $a,b\in \BZ^\ast$ with $\abs{a}\neq 1$. Then,
$$\left| \left\{p \leq x\text{ prime }: b\bmod{p} \in \langle a \bmod{p} \rangle \subset \BF_p^\ast \right\}\right| \gg \log x. $$
\end{theorem}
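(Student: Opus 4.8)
The plan is to deduce Theorem~\ref{thm2} from a lower bound for the number of distinct prime divisors among the first $N$ terms of the binary recurrence $w_n := a^n - b$. The starting point is an elementary reformulation: for a prime $p$ with $p \nmid a$, one has $b \bmod p \in \langle a \bmod p \rangle$ if and only if $a^n \equiv b \pmod{p}$ for some integer $n \geq 1$, i.e.\ if and only if $p \mid w_n$ for some $n \geq 1$. Thus, for any $N$, every prime divisor of $W_N := \prod_{1 \leq n \leq N,\; w_n \neq 0} w_n$ that does not divide $a$ lies in the set counted in the theorem. Take $N = N(x)$ to be the largest integer with $\abs{a}^{N+1} \leq x$, so that $N \gg_a \log x$; for $x$ large this gives $\abs{w_n} \leq \abs{a}^n + \abs{b} \leq x$ for every $n \leq N$, so every prime dividing $W_N$ is $\leq x$. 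Writing $\omega(\cdot)$ for the number of distinct prime divisors,
\[
\left| \left\{ p \leq x \text{ prime}: b \bmod p \in \langle a \bmod p \rangle \right\} \right| \;\geq\; \omega(W_N) - \omega(a),
\]
so it suffices to prove $\omega(W_N) \gg_{a,b} N$.

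Since $\abs{a} \geq 2$, the sequence $(w_n)_{n \geq 1}$ is a non-degenerate binary recurrence sequence: its characteristic polynomial is $(X-a)(X-1)$ and the ratio $a$ of the two roots is not a root of unity (also $w_n = 0$ for at most one $n$). The crucial input — and the place where the transcendence result of Stewart~\cite{Stew} enters, through an effective lower bound for the greatest prime factor of the terms of such a sequence — is that $w_n$ has a \emph{primitive prime divisor} for all large $n$: there is $n_0 = n_0(a,b)$ so that for each $n > n_0$ some prime $q_n$ divides $w_n$ but divides no $w_m$ with $1 \leq m < n$. Any such $q_n$ automatically satisfies $q_n \nmid a$ (a prime dividing both $a$ and $w_n = a^n - b$ would divide $b$, hence every $w_m$); equivalently, $q_n \mid w_n$ and $\ord_{q_n}(a) \geq n$. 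For $n_0 < n < n'$ we have $q_n \neq q_{n'}$, since $q_{n'}$ does not divide $w_n$. Hence $\omega(W_N) \geq N - O_{a,b}(1) \gg_{a,b} N$, which together with the reduction above and $N \gg_a \log x$ proves Theorem~\ref{thm2}.

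The step I expect to be the real obstacle is precisely the existence of primitive prime divisors for all large $n$. If $b = 1$ the set in the theorem is cofinite among the primes, and for $b = \pm 1$ more generally one can appeal to the classical theorem of Zsygmondy/Birkhoff applied to $a^n - 1$ or $a^{2n} - 1$; but when $b \neq \pm 1$ the term $w_n = a^n - b$ is not of the form $\alpha^n - \beta^n$, so cyclotomic factorization is unavailable. Purely elementary divisibility and $\gcd$ estimates are then too weak even to produce a positive proportion of indices $n \leq N$ with a primitive divisor: $w_n$ may be divisible by large prime powers, and many primes can have small multiplicative order with respect to $a$, and both phenomena obstruct a direct contradiction with the hypothesis that $w_n$ is composed only of primes already occurring among the earlier terms. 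The way around this is Stewart's effective lower bound for the largest prime factor of $w_n$ — ultimately resting on Baker's estimates for linear forms in logarithms — which grows fast enough to force, once $n$ is large, a prime factor of $w_n$ of multiplicative order at least $n$, that is, a primitive one. The remainder of the argument is bookkeeping.
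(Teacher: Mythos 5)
Your reduction of Theorem \ref{thm2} to the estimate $\omega(W_N) \gg_{a,b} N$ for $W_N = \prod_{n \leq N} (a^n - b)$ with $N \asymp \log x$ is exactly the paper's (Section \ref{sec:thm2}), but the way you propose to prove that estimate has a genuine gap. You claim that Stewart's lower bound for the greatest prime factor $P(w_n)$ forces $w_n = a^n - b$ to have a primitive prime divisor for all large $n$. It does not. A prime $q \mid w_n$ is primitive precisely when $\ord_q(a) \geq n$ (as you correctly note), but the only lower bound on $\ord_q(a)$ that the \emph{size} of $q$ provides is $\ord_q(a) > \log q/\log\abs{a}$, coming from $q \mid a^{\ord_q(a)} - 1$. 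With $q = P(w_n)$ of size about $\sqrt{n}\exp(\log n/104\log\log n)$ this gives only $\ord_q(a) \gg \log n$, nowhere near $n$; to force primitivity from size alone you would need $P(w_n) > \abs{a}^{n-1}$, far beyond what Baker's theory delivers. In fact the existence of primitive divisors of $a^n - b$ for all large $n$ when $b \neq \pm 1$ is open: if it were available it would give $\omega(W_N) \geq N - O(1)$, strictly stronger than the paper's Theorem \ref{thm:brs} (whose constant is only $1 - 1/\sqrt{2} \approx 0.29$), and would render Sections \ref{sec:prelim} and \ref{sec:rs} unnecessary. The paper's Theorem \ref{thm:brs1} achieves $N-9$ only for Lucas sequences, via Bilu--Hanrot--Voutier, and $a^n - b$ is of Lucas type only when $b = 1$.

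What the greatest-prime-factor bound actually yields is the much weaker Theorem \ref{thm1}: in Section \ref{sec:rs8} the authors show that two indices $m < n$ with $P(a^m - b) = P(a^n - b) = q$ force $q \mid b^{n-m} - 1$, hence $n - m \gg \log q/\log\abs{b} \gg \log N$, which produces only about $\log N \asymp \log\log x$ distinct primes. To get the full $\log x$, the paper instead proves Theorem \ref{thm:brs} by a $p$-adic counting argument: it compares the archimedean lower bound $\abs{S} \geq \abs{\alpha}^{(1-\gamma^2)N^2/2 - O(N\log N)}$ for $S = \prod_{\gamma N < n \leq N} u_n$ (Proposition \ref{prop:3}, from linear forms in logarithms) with upper bounds for each $\abs{S}_p^{-1}$ obtained from the identity $u_m - \beta^r u_{m-r} = a'\alpha^{m-r}t_r$ together with Stewart's lemmas on the $p$-adic valuations of the associated Lucas numbers $t_r$; if too few primes divided $S$, the product of the $\abs{S}_p^{-1}$ could not be as large as $\abs{S}$. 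You would need to supply an argument of this kind, or some other substitute, for the step you have left unproved.
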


We do so by proving in section \ref{sec:RS} a more general result about binary recurrence sequences.
\begin{restatable*}{theorem}{brs}
\label{thm:brs}
Let $\left\{u_n\right\}_{n=1}^{\infty}$ be a non-degenerate binary recurrence sequence with the $n$-th term given by (\ref{eq:brs}). Let $\epsilon$ be a positive real number.  There exists an effectively computable positive number $C$, depending on $\epsilon, a, b, \alpha$ and $\beta$, such that if $N$ exceeds $C$ then
$$\omega \left( \prod_{\substack{ n=1\\u_n\neq 0}}^N u_n \right) > \left( 1- 1/\sqrt{2} - \epsilon\right) N.$$
\end{restatable*}

We also prove a more precise result for the specific case of Lucas sequences.
\begin{restatable*}{theorem}{brsluc}
\label{thm:brs1}
Let $\left\{t_n\right\}_{n=1}^{\infty}$ be a non-degenerate Lucas sequence. Then
\begin{align*}
\omega \left( \prod_{\substack{ n=1}}^N t_n \right) \geq N-9.
\end{align*}
Equality holds when $t_n$ satisfies
\begin{align*}
t_n=t_{n-1}-2t_{n-2} \,\,\,for \,\,\,n=2,3,...
\end{align*}
and $N = 30, 31, 32, 33 \,\text{ or }\,\,34$.
\end{restatable*}

We finally conjecture the following stronger statement.

\begin{restatable*}{conjecture}{conjbrs}
There exist positive numbers $C_1$ and $C_2$, which depend on $a,b,\alpha$ and $\beta$, such that if $\left\{u_n\right\}_{n=1}^{\infty}$ is a non-degenerate binary recurrence sequence, then
\begin{align*}
C_1N \log N \leq \omega \left( \prod_{\substack{ n=1\\u_n\neq 0}}^N u_n \right) \leq C_2N \log N.
\end{align*}
\end{restatable*}
Note that the lower bound obtained from this conjecture could be used to improve Theorem \ref{thm2} by replacing $\log x$ with $\log x \log\log x$ in the lower bound.

We shall also give several proofs, which we believe to be of independent interest, for the following theorem, which is a weaker version of Theorem \ref{thm2}.
\begin{theorem} \label{thm1}
Let $a,b\in \BZ^\ast$ with $\abs{a}\neq 1$. Then,
$$\left| \left\{p \leq x\text{ prime }: b\bmod{p} \in \langle a \bmod{p} \rangle \subset \BF_p^\ast \right\}\right| \gg \log \log x. $$
\end{theorem}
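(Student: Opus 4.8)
The plan is to reduce the statement to a question about prime divisors of the integers $a^{n}-b$, and then to invoke a quantitative finiteness theorem for the assertion that $a^{n}-b$ is built only out of a prescribed finite set of primes.

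First, if a prime $p$ divides $a^{n}-b$ for some $n\ge 1$ and $p\nmid a$, then $a$ is a unit modulo $p$ and $b\equiv a^{n}\pmod p$, so $b\bmod p\in\langle a\bmod p\rangle$ and, automatically, $p\nmid b$; hence such a $p$ is counted in Theorem~\ref{thm1}. So it suffices to prove that
$$\mathcal P(x):=\{\,p\le x\ \text{prime}:\ p\nmid a\ \text{and}\ p\mid a^{n}-b\ \text{for some }n\ge 1\,\}$$
satisfies $\abs{\mathcal P(x)}\gg\log\log x$. Since $\abs a\ge 2$, the values $a^{1}-b,a^{2}-b,\dots$ are pairwise distinct, and all but $O(1)$ of them are nonzero and different from $\pm1$. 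Fix $x$ large and let $N$ be a positive integer comparable to $\log x/\log\abs a$, small enough that $\abs{a^{n}-b}\le x$ for all $1\le n\le N$. Then at least $N-O(1)$ of the integers $a^{n}-b$ $(1\le n\le N)$ are distinct, nonzero and not $\pm1$, and every prime factor of each of them is at most $x$ and either divides $a$ or lies in $\mathcal P(x)$.

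Suppose now $\abs{\mathcal P(x)}=k$, and let $S$ be $\mathcal P(x)$ together with the $O(1)$ primes dividing $ab$, so $\abs S=k+O(1)$; by the above, each of these $\ge N-O(1)$ integers $a^{n}-b$ is, up to sign, an $S$-unit. The crux is to bound the number of $n\ge1$ with $a^{n}-b$ an $S$-unit by a quantity growing at most exponentially in $\abs S$. One clean way: $a$ and $b$ are $S$-units, so $a^{n}/b-(a^{n}-b)/b=1$ exhibits each such $n$ as a solution of $\xi+\eta=1$ in $S$-units of $\BQ$, with distinct $n$ giving distinct $\xi=a^{n}/b$, whence the count is $\le\exp(O(\abs S))$ by the $S$-unit equation bounds of Evertse or of Beukers--Schlickewei. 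Alternatively one writes an $S$-smooth value as a perfect $r$-th power times one of $\ll r^{\abs S}$ bounded factors and splits $n$ modulo $r$, reducing the count for each fixed $r\ge3$ to $\ll r^{\abs S}$ Thue equations $a^{\rho}X^{r}-cY^{r}=b$ of degree $r$, each with $\ll r^{\,1+\omega(b)}$ solutions by Bombieri--Schmidt \cite{BoSc}; or one appeals to Stewart's transcendence estimates \cite{Stew} for the greatest prime factor of $a^{n}-b$, or to Mumford's gap principle \cite{Mum} for the number of points of bounded height on these curves (of genus $\ge2$ once $r\ge4$). Each input gives an $\exp(O(\abs S))$ bound, up to at most an extra power of $\log N$; these are presumably the three proofs advertised as ``of independent interest''.

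Combining, $N-O(1)\le\exp(O(k))$, so $k\gg\log N\gg\log\log x$, which is Theorem~\ref{thm1}; running the argument for each $k$ in turn shows moreover that $\mathcal P$ is infinite and that its $(k+1)$-st smallest element is at most $\exp(\exp(O(k)))$. I expect the main obstacle to be organising each reduction so that the Diophantine bound depends on $\abs S$ only exponentially and no worse --- it is exactly an $\exp(O(\abs S))$ bound, rather than a tower in $\abs S$, that produces the $\log\log x$ here; a weaker dependence would weaken the conclusion, while a polynomial-type dependence, of the kind underlying the recurrence-sequence estimates behind Theorem~\ref{thm2}, would give $\log x$ or more.
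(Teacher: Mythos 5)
Your opening reduction (to primes dividing $a^{n}-b$ for $n\le N\asymp\log x/\log\abs a$) is exactly the paper's, and the three alternatives you sketch at the end --- Stewart's lower bound for $P(a^{n}-b)$, Bombieri--Schmidt for Thue equations, and Mumford's gap principle --- are precisely the paper's three proofs (Sections \ref{sec:rs8}, \ref{sec:BS} and \ref{sec:mgp}). Your primary argument, however, takes a genuinely different and correct route: writing $a^{n}/b-(a^{n}-b)/b=1$ realises each admissible $n$ as a solution of the two-term $S$-unit equation over $\BQ$ with $S$ consisting of the $k$ counted primes together with the divisors of $ab$, distinct $n$ give distinct $\xi=a^{n}/b$ because $\abs a\ge2$ makes the $a^{n}$ pairwise distinct, and the Evertse (or Beukers--Schlickewei) bound, which depends only exponentially on $\abs S$, yields $N-O(1)\le\exp(O(k+\omega(ab)))$ and hence $k\gg\log N\gg\log\log x$. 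What your route buys: it needs no coprimality hypothesis on $a$ and $b$ (the paper's Thue-equation proof assumes $(a,b)=1$ because Theorem \ref{thm:B-S} counts only primitive solutions), and it extends immediately to rational $a,b$ --- the paper explicitly remarks that generalising its Thue-equation proof would require a bound on $S$-integer solutions, which is exactly what you are already using. What the paper's proofs buy: the Stewart-based argument keeps the dependence on $b$ explicit and elementary (a window of length $y\asymp\log\sqrt N/\log b$ in which largest prime factors must be distinct), the Thue-equation proof makes the constant essentially $\log\log x-\log\log a-\omega(b)$, and the Mumford proof avoids linear forms in logarithms entirely; your $S$-unit input is itself proved by Thue--Mahler-type Diophantine approximation, so in effect you have packaged the second proof into a sharper, more uniform black box. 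Your closing diagnosis is also accurate: the $\log\log x$ is exactly the shadow of an $\exp(O(\abs S))$ count, and it is the polynomial-in-$N$ counting of Theorem \ref{thm:brs} (not a better Diophantine bound) that upgrades this to $\log x$ in Theorem \ref{thm2}.
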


The last theorem we prove is a disjunction theorem.

\begin{theorem} \label{thmdis}
Let $a,b\in \BZ^\ast$ with $(a,b)=1$. Then,
\begin{align*}
\left| \left\{p \leq x\text{ prime }: \begin{array}{c} b\bmod{p} \in \langle a \bmod{p} \rangle \subset \BF_p^\ast \\ \text{or } \langle b \bmod{p} \rangle = \BF_p^\ast \end{array} \right\}\right| \gg \frac{x}{\left(\log x\right)^2}.
\end{align*}
\end{theorem}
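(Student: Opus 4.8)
The plan is to produce $\gg x/(\log x)^{2}$ primes $p\le x$ at which the disjunction holds, following the strategy of Gupta and Murty \cite{GuMu}: one runs over primes $p$ for which $p-1$ has a prescribed \emph{large} prime factor. We may assume $|a|,|b|\ge 2$: if $b=\pm1$ the claim is easy (for $b=1$ it is trivial, since $b\bmod p\in\langle a\bmod p\rangle$ for every $p$; for $b=-1$ one has $-1\in\langle a\bmod p\rangle$ iff $\ord_p(a)$ is even, which by Chebotarev holds for $\gg x/\log x$ primes, using $|a|\neq1$), while $|a|=1$ is excluded (the assertion being Artin's conjecture for $b$ in that case). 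Since $(a,b)=1$, the integers $a,b$ are then multiplicatively independent in $\mathbb{Q}^{\times}$. Fix a constant $\theta<1$, close to $1$, and consider primes $q$ in the dyadic range $(x^{\theta},2x^{\theta}]$; for each such $q$ we study the primes $p\le x$ with $p\equiv1\pmod q$ and $q\,\|\,p-1$, writing $p-1=qm$, so that $m<x^{1-\theta}$ and $\gcd(q,m)=1$.

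The first step is purely group-theoretic. Fix such a prime $p$ and suppose moreover that $a$ is not a $q$-th power modulo $p$, so that $q\mid\ord_p(a)$. Writing $\BF_p^{\ast}\cong C_q\times C_m$ (possible since $q\,\|\,p-1$) and letting $a_m,b_m$ denote the images of $a,b$ in the $C_m$-factor, cyclicity gives
\[
b\bmod p\in\langle a\bmod p\rangle\quad\Longleftrightarrow\quad b_m\in\langle a_m\rangle\ \text{ in }C_m,
\]
and $\langle b\bmod p\rangle=\BF_p^{\ast}$ forces $b_m$ to generate $C_m$ (and $b\notin(\BF_p^{\ast})^{q}$). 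Hence, for such $p$, the disjunction holds as soon as $b_m\in\langle a_m\rangle$ \emph{or} $b_m$ generates $C_m$ (with $b\notin(\BF_p^{\ast})^{q}$); call this the \emph{$C_m$-condition}. This is a union of Chebotarev conditions at the relatively small primes $r\mid m$ (recall $m<x^{1-\theta}$), and — because $a,b$ are multiplicatively independent, so the Kummer extensions $\mathbb{Q}(\zeta_r,a^{1/r},b^{1/r})$ have the expected degree for all but finitely many $r$ — its density, conditional on $p\equiv1\pmod q$, $q\,\|\,p-1$ and $a\notin(\BF_p^{\ast})^{q}$, is at least $\varphi(m)/m$ (using the "$b_m$ generates" alternative), hence bounded away from $0$ for a positive proportion of the moduli $m$ that occur.

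It therefore suffices to prove, \emph{on average over} $q\in(x^{\theta},2x^{\theta}]$, the lower bound
\[
\#\bigl\{p\le x:\ p\equiv1\pmod q,\ q\,\|\,p-1,\ a\notin(\BF_p^{\ast})^{q},\ \text{the }C_m\text{-condition holds}\bigr\}\ \gg\ \frac{x}{q\log x},
\]
for then summing over $q$ yields $\gg \tfrac{x}{\log x}\sum_{q\in(x^{\theta},2x^{\theta}]}\tfrac1q\gg\tfrac{x}{(\log x)^{2}}$ primes at which the disjunction holds — and the count is over a \emph{disjoint} family, since $\theta>\tfrac12$ implies a prime $p\le x$ cannot have two prime divisors of $p-1$ both exceeding $x^{1/2}$. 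This displayed estimate is the heart of the matter, and also the main obstacle. Since $q\sim x^{\theta}$ lies well beyond the Bombieri–Vinogradov range, one cannot simply appeal to Bombieri–Vinogradov to count primes in the progression $p\equiv1\pmod q$; instead one needs the machinery for primes in progressions to large moduli on average (Fouvry, Bombieri–Friedlander–Iwaniec), combined with a Chebotarev sieve in the style of Gupta–Murty to impose the conditions at $q$ and at the prime divisors of $m$, together with control on the distribution of $m=(p-1)/q$. The delicate point throughout is uniformity in $q$ and the treatment of moduli $m$ with atypically many, or atypically large, prime divisors, for which the Kummer conditions fall outside the effective range: it is exactly these bad cases that the alternative $\langle b\bmod p\rangle=\BF_p^{\ast}$ absorbs, which is why the theorem must be phrased as a genuine disjunction rather than as an unconditional statement about either clause separately — neither of which is known in this strength. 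Granting the displayed estimate, summing over $q\in(x^{\theta},2x^{\theta}]$ completes the proof.
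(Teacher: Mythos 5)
Your proposal does not close: the displayed lower bound that you yourself identify as ``the heart of the matter'' is never established, and as formulated it cannot be with current technology. For $\theta$ close to $1$, even the much weaker assertion that $\gg x/\log x$ primes $p\le x$ have a prime factor of $p-1$ exceeding $x^{\theta}$ is open (the best known exponents, via Fouvry, Baker--Harman and others, are below $0.7$). Producing $\gg x/(q\log x)$ primes $p\equiv 1\pmod{q}$ for $q\sim x^{\theta}$, even averaged over a dyadic range of $q$ and before imposing the splitting conditions at $q$ and at the prime divisors of $m=(p-1)/q$, is far beyond the Bombieri--Friedlander--Iwaniec machinery, which gives equidistribution with well-factorable weights in restricted moduli ranges, not lower bounds of this strength for moduli near $x$. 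The Chebotarev/Kummer step has the same difficulty in miniature: imposing ``$b_m$ generates $C_m$'' requires controlling the extensions $\BQ(\zeta_r,a^{1/r},b^{1/r})$ for every prime $r\mid m$ with $m$ as large as $x^{1-\theta}$, and you concede that the bad $m$ ``fall outside the effective range'' without supplying a mechanism to discard them. In two places the argument therefore assumes exactly the kind of unconditional input that is unavailable; flagging the obstacle does not remove it.

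The paper's route is entirely different and much more economical. It quotes the Gupta--Murty theorem as a black box: there are $\gg x/(\log x)^2$ primes $p\le x$ with $p-1\in 2P_2$ and $\left(\frac{a}{p}\right)=\left(\frac{b}{p}\right)=-1$. For such $p$ the orders $f_p(a)$ and $f_p(b)$ are even divisors of $p-1=2q_1$ or $2q_1q_2$, so each equals $2q_1q_2$, $2q_2$ or $2q_1$ (with $q_1<q_2$). The only troublesome case, $f_p(a)=2q_1$, is shown to occur for $o\!\left(x/(\log x)^2\right)$ primes by counting divisors of $a^{2q_1}-1$ when $q_1<\sqrt{x}/\log x$ and by a Bombieri--Friedlander--Iwaniec upper bound on primes with $(p-1)/2q_1$ free of small prime factors when $q_1\ge\sqrt{x}/\log x$. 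The punchline is purely group-theoretic: if neither $a$ nor $b$ is a primitive root, then $f_p(a)=f_p(b)=2q_2$, and since a cyclic group has a unique subgroup of each order, $\langle a\rangle=\langle b\rangle$ and $b\in\langle a\rangle$. All the analytic difficulty is thus outsourced to the already-proved Gupta--Murty input, where the $P_2$ structure of $(p-1)/2$ is what makes the case analysis finite; that is the idea your proposal is missing.
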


This theorem suggests that it might be possible to prove positive density of this set unconditionally.
It is worth noting that unlike the original Artin conjecture, the set $S$ is known to be infinite. Moree and Stevenhagen included in \cite{MoSt} a modification of a simple argument by Pólya (found in \cite{Po}) that proves the infinitude unconditionally. However, trying to derive a lower bound directly from their argument does not lead to any explicit function going to infinity.

We will start by proving Theorem \ref{thm:brs} in section \ref{sec:rs} after a few preliminaries in sections \ref{sec:RS} and \ref{sec:prelim}. Theorem \ref{thm:brs1} will be proven in section \ref{sec:rs88}. Then, we will use Theorem \ref{thm:brs} to prove Theorem \ref{thm2} in section \ref{sec:thm2}. Our three proofs for Theorem \ref{thm1} are in sections \ref{sec:rs8}, \ref{sec:BS} and \ref{sec:mgp} respectively. Finally, we will prove Theorem \ref{thmdis} in section \ref{sec:thmdis}.

\section{Prime divisors of terms of recurrence sequences}\label{sec:RS}

For any non-zero integer $n$ let $\omega (n)$ denote the number of distinct prime factors of $n$.
Let $r$ and $s$ be integers with $r^2+4s\neq 0$. Let $u_0$ and $u_1$ be integers and put
$$u_n = ru_{n-1}+su_{n-2} \text{ for } n \geq 2.$$
Then,
\begin{align}\label{eq:brs}
u_n = a\alpha^n + b\beta^n
\end{align}
where $\alpha$ and $\beta$ are the roots of the polynomial
$$x^2-rx-s$$
and
\begin{align*}
a = \frac{u_0\beta - u_1}{\beta-\alpha}, \qquad b=\frac{u_1-u_0\alpha}{\beta-\alpha}.
\end{align*}

The sequence $\left\{u_n\right\}_{n=0}^{\infty}$ is called a \emph{binary recurrence sequence}. It is said to be \emph{non-degenerate} if $ab\alpha\beta \neq 0$ and $\alpha/\beta$ is not a root of unity.

Note that for non-degenerate binary recurrence sequences, if $\abs{\alpha} \geq \abs{\beta}$, then
\begin{align}\label{eq:BRS}
\abs{\alpha}\geq \sqrt{2}.
\end{align}
Indeed, if $\alpha$ and $\beta$ are integers this is obvious. Otherwise, if $\abs{s} \geq 2$, then $2\leq \abs{s} = \abs{\alpha\beta} \leq \abs{\alpha}^2$. Finally, suppose $\abs{s}=1$. If $\BQ(\alpha)$ was an imaginary quadratic field, $\frac{\alpha}{\beta}$ would be a root of unity, which is not allowed. If $\BQ(\alpha)$ is totally real, then $\alpha = a+b\sqrt{D}$ and $\beta = \pm(a-b\sqrt{D})$ for some $a,b\in \BZ\left[\frac{1}{2}\right]$. Since $\abs{\alpha} \geq \abs{\beta}$, $a$ and $b$ have the same sign, and so $\abs{\alpha} = \abs{a} + \abs{b}\sqrt{D} \geq \sqrt{2}$.

In 1921 Polya \cite{Po} showed that 
\begin{align}\label{eq:Po}
{\omega \left( \prod_{\substack{ n=1\\u_n\neq 0}}^N u_n \right) \to\infty} \,\, as \,\, {N \to\infty};
\end{align}
Gelfond \cite{Gel} and Mahler \cite{Mah} in 1934 and Ward \cite{War} in 1954 gave alternative proofs of (\ref{eq:Po}).
In 1987 Shparlinski \cite{Shpa} showed that
\begin{align}\label{eq:Pol}
{\omega \left( \prod_{\substack{ n=1\\u_n\neq 0}}^N u_n \right) \gg N/ \log N},
\end{align}
improving on an earlier result of his \cite{Sha2} where he had established (\ref{eq:Pol}) with the righthand side of (\ref{eq:Pol}) replaced by $N^{1/2}$. It should be noted that Shparlinski's result (\ref{eq:Pol}) applies not just to binary recurrence sequences but to non-degenerate sequences of order $k$ with $k \geq 2$.

We are able to improve upon (\ref{eq:Pol}) for binary recurrence sequences.

\brs

Theorem \ref{thm:brs} is the key result we need to establish Theorem \ref{thm2}.
\medskip

A \emph{Lucas sequence} is a non-degenerate binary recurrence sequence $\left\{t_n\right\}_{n=0}^{\infty}$ with $t_0=0$ and $t_1=1$. Thus, $a=\frac{1}{\alpha-\beta}$ and $b=\frac{-1}{\alpha - \beta}$, so that from (\ref{eq:brs}), we have
\begin{align}\label{eq:LUC10}
t_n = \frac{\alpha^n - \beta^n}{\alpha-\beta}
\end{align}
for $n\geq 0.$
The divisibility properties of Lucas sequences have been extensively studied, see for example \cite{Lucas}, \cite{Ca} and \cite{Stew78}, and for these binary recurrence sequences we are able to improve on Theorem \ref{thm:brs}.

\brsluc

It is not difficult to show that if $\left\{u_n\right\}_{n=1}^{\infty}$ is a non-degenerate binary recurrence sequence then
\begin{align}\label{eq:LUC2}
\omega \left( \prod_{\substack{ n=1\\u_n\neq 0}}^N u_n \right) \ll N^2/ \log N.
\end{align}
To see this suppose that $u_n$ is given by (\ref{eq:brs}) with $\abs{\alpha} \geq \abs{\beta}$. Then
\begin{align*}
|u_n| \leq (|a|+|b|)|\alpha|^n
\end{align*}
and therefore
\begin{align}\label{eq:LUC3}
\left| \prod_{\substack{ n=1\\u_n\neq 0}}^N u_n \right| \leq (|a|+|b|)^N|\alpha|^{N(N+1)/2}.
\end{align}
Let $2=p_1,p_2,... $ be the sequence of prime numbers. By the Prime Number Theorem
\begin{align}\label{eq:LUC4}
  \prod_{\substack{ i=1}}^t p_i = \exp((1+o(1))t\log t .
\end{align}
Observe that if
\begin{align*}
 \prod_{i=1}^t p_i  \geq  \left| \prod_{\substack{ n=1\\u_n\neq 0}}^N u_n \right| .
\end{align*}
then
\begin{align*}
\omega \left( \prod_{\substack{ n=1\\u_n\neq 0}}^N u_n \right) \leq t.
\end{align*}
Thus (\ref{eq:LUC2}) follows from (\ref{eq:BRS}), (\ref{eq:LUC3}) and (\ref{eq:LUC4}).

We make the following conjecture.
\conjbrs

\section{Preliminaries for the proof of Theorem \ref{thm:brs}} \label{sec:prelim}
The first two results we require concern prime divisors of Lucas numbers.
\begin{proposition}\label{prop:1}
Let $\left\{t_n\right\}_{n=0}^{\infty}$ be a Lucas sequence, as in (\ref{eq:LUC10}), with $\abs{\alpha} \geq \abs{\beta}$. If $p$ is a prime number not dividing $\alpha\beta$ then $p$ divides $t_n$ for some positive integer $n$ and if $\ell$ is the smallest such $n$ then
\begin{align*}
\frac{\log p - \frac{\log 2}{2}}{\log \abs{\alpha}} \leq \ell \leq p+1.
\end{align*}
\end{proposition}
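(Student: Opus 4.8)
The plan is to split the claim into three parts: (1) that $p \mid t_n$ for some $n \geq 1$, (2) the upper bound $\ell \leq p+1$, and (3) the lower bound $\ell \geq (\log p - \tfrac{\log 2}{2})/\log|\alpha|$. Parts (1) and (2) are classical and I would handle them together. Working modulo $p$, one distinguishes two cases according to whether $x^2 - rx - s$ splits mod $p$ or is irreducible mod $p$. If it splits, then $\alpha, \beta$ reduce to elements of $\BF_p^\ast$ (they are nonzero since $p \nmid \alpha\beta$), and $t_n \equiv (\alpha^n - \beta^n)/(\alpha - \beta) \pmod p$; since $\alpha^{p-1} \equiv \beta^{p-1} \equiv 1$, we get $p \mid t_{p-1}$, so $\ell$ exists and $\ell \leq p-1 \leq p+1$. (One must check $\alpha \not\equiv \beta \pmod p$, which holds because $p \nmid (\alpha-\beta)^2 = r^2+4s$; if $p \mid r^2+4s$ one argues separately, but then $p \mid t_p$ anyway via the standard formula.) If $x^2-rx-s$ is irreducible mod $p$, then $\alpha$ lives in $\BF_{p^2}^\ast$, its conjugate over $\BF_p$ is $\beta \bmod p$, and $\beta \equiv \alpha^p \pmod{\mf p}$ for a prime $\mf p$ above $p$; hence $\alpha^{p+1} \equiv \alpha\beta = -s \equiv \beta^{p+1} \pmod{\mf p}$, so $\alpha^{p+1} - \beta^{p+1} \equiv 0$, giving $p \mid t_{p+1}$ and $\ell \leq p+1$. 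This establishes (1) and (2).

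For the lower bound (3), the idea is a size estimate: if $\ell$ is minimal with $p \mid t_\ell$, then $p \leq |t_\ell|$. From (\ref{eq:LUC10}), $|t_\ell| = |\alpha^\ell - \beta^\ell|/|\alpha - \beta| \leq (|\alpha|^\ell + |\beta|^\ell)/|\alpha-\beta| \leq 2|\alpha|^\ell/|\alpha-\beta|$, and since $|\alpha-\beta| \geq \sqrt{|\alpha\beta|} \cdot (\text{something})$... more carefully, one wants $|t_\ell| \leq |\alpha|^\ell \cdot (\text{constant} \leq \sqrt 2)$. Note $|\alpha - \beta|^2 = (|\alpha| - |\beta|)^2 + 2|\alpha||\beta|(1 - \cos\theta) \geq$ ... the clean route is: $|\alpha^\ell - \beta^\ell| \leq |\alpha|^\ell + |\beta|^\ell \leq 2|\alpha|^\ell$, while $|\alpha - \beta| \geq$ ? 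We need a lower bound on $|\alpha-\beta|$. Actually $|\alpha-\beta|^2 = r^2 + 4s$ if real, or $= |r^2+4s|$ in general is $|\alpha - \beta|^2 = |(\alpha-\beta)^2|$. Since $\alpha \neq \beta$ and $(\alpha-\beta)^2 = r^2+4s \in \BZ \setminus\{0\}$, we get $|\alpha - \beta| \geq 1$. Therefore $p \leq |t_\ell| \leq 2|\alpha|^\ell / 1$, hmm that gives $\log p \leq \log 2 + \ell \log|\alpha|$, i.e. $\ell \geq (\log p - \log 2)/\log|\alpha|$, which is slightly weaker than claimed. To get the stated constant $\tfrac{\log 2}{2}$ one sharpens: $|\alpha^\ell - \beta^\ell| \leq |\alpha|^\ell + |\beta|^\ell$ and $|\beta|^\ell \leq |\alpha|^\ell$, but also $|\alpha|^\ell|\beta|^\ell = |\alpha\beta|^\ell = |s|^\ell \geq 1$; combined with $|\alpha-\beta|\ge 1$ and a more careful inequality (e.g. when $|\beta|$ is small the bound $|\alpha^\ell - \beta^\ell| \le |\alpha|^\ell(1+|\beta/\alpha|^\ell)$ together with $|\alpha-\beta|^2 \geq |\alpha|^2 - \ldots$), one extracts $p \le \sqrt 2\,|\alpha|^\ell$, which rearranges to exactly the stated inequality.

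The main obstacle I anticipate is pinning down the sharp constant $\tfrac{\log 2}{2}$ in the lower bound rather than a cruder constant like $\log 2$; this requires care in simultaneously controlling $|\alpha - \beta|$ from below and $|\alpha^\ell - \beta^\ell|$ from above, exploiting $|\alpha\beta| = |s| \ge 1$ and, in the totally real unit case, the explicit description $\alpha = a + b\sqrt D$, $\beta = \pm(a - b\sqrt D)$ already used to prove (\ref{eq:BRS}). Everything else — the two-case analysis mod $p$ for existence and the $\ell \le p+1$ bound — is standard Lucas-sequence theory and should go through cleanly. I would also double-check the edge case $p \mid r^2 + 4s$ (repeated root mod $p$), where the formula $t_p \equiv$ (discriminant-type term) $\pmod p$ handles existence, and confirm it does not interfere with the size bound since that argument never used the splitting behaviour.
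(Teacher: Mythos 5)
Your plan coincides with the paper's proof in its essentials: the paper quotes Lemma 7 of \cite{Stew} for the existence of $\ell$ and the bound $\ell\leq p+1$ (your two-case analysis of $x^2-rx-s$ modulo $p$, including the ramified case $p\mid r^2+4s$ where $p\mid t_p$, is exactly the classical proof of that lemma), and it obtains the lower bound precisely as you propose, from $p\leq\abs{t_\ell}\leq\sqrt{2}\,\abs{\alpha}^{\ell}$. The only step you leave genuinely unfinished is the constant: your clean argument only yields $\abs{t_\ell}\leq 2\abs{\alpha}^{\ell}$, hence the weaker bound $\ell\geq(\log p-\log 2)/\log\abs{\alpha}$, and your sketch of how to recover $\sqrt{2}$ trails off into ``one extracts.'' This is a real (if small) gap, and it closes with a three-case check rather than a single inequality. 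If $\alpha,\beta$ are real of the same sign, then $\abs{\alpha^n-\beta^n}=\bigl|\abs{\alpha}^n-\abs{\beta}^n\bigr|\leq\abs{\alpha}^n$ while $(\alpha-\beta)^2=r^2+4s$ is a positive integer, so $\abs{\alpha-\beta}\geq 1$ and $\abs{t_n}\leq\abs{\alpha}^n$. If they are real of opposite signs, then $\abs{\alpha-\beta}=\abs{\alpha}+\abs{\beta}\geq\abs{\alpha}$, so
\begin{align*}
\abs{t_n}\leq\frac{\abs{\alpha}^n+\abs{\beta}^n}{\abs{\alpha}+\abs{\beta}}\leq 2\abs{\alpha}^{n-1}\leq\sqrt{2}\,\abs{\alpha}^{n},
\end{align*}
using $\abs{\alpha}\geq\sqrt{2}$ from (\ref{eq:BRS}). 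If they are complex conjugates, then $r^2+4s$ is a negative integer congruent to $0$ or $1$ modulo $4$, hence $r^2+4s\leq -3$ and $\abs{\alpha-\beta}\geq\sqrt{3}$, giving $\abs{t_n}\leq 2\abs{\alpha}^n/\sqrt{3}<\sqrt{2}\,\abs{\alpha}^n$. In every case $\abs{t_\ell}\leq\sqrt{2}\,\abs{\alpha}^{\ell}$, and since $t_\ell$ is a nonzero integer divisible by $p$ this rearranges to the stated lower bound. With that case analysis inserted, your argument is complete and is in fact more self-contained than the paper's, which does not justify the factor $\sqrt{2}$ either.
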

\begin{proof}
Apart for the lower bound, this is Lemma 7 of \cite{Stew}. The lower bound follows from $p\leq \abs{t_\ell} \leq \sqrt{2}\abs{\alpha}^\ell$.
\end{proof}

For any rational number $x$ let $\abs{x}_p$ denote the p-adic value of $x$, normalized so that $\abs{p}_p = p^{-1}.$
\begin{proposition}\label{prop:2}
Let $\left\{t_n\right\}_{n=0}^{\infty}$ be a Lucas sequence, as in (\ref{eq:LUC10}), with $\alpha +\beta$ and $\alpha\beta$ coprime. Let $p$ be a prime number which does not divide $\alpha\beta$, let $\ell$ be the smallest positive integer for which $p$ divides $t_\ell$ and let $n$ be a positive integer. If $\ell$ does not divide $n$  then
\begin{align*}
|t_n|_p=1.
\end{align*}
If $n= \ell k$ for some positive integer $k$ we have, for $p > 2$,
\begin{align*}
\abs{t_n}_p &= \abs{t_\ell}_p \abs{k}_p,
\intertext{while for $p=2$,}
\abs{t_n}_2 &= \begin{cases}\abs{t_\ell}_2 & \text{ for } k \text{ odd }\\ 2\abs{t_{2\ell}}_2 \abs{k}_2 & \text{ for } k \text{ even. }\end{cases}
\end{align*}
\end{proposition}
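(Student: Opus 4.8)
The plan is to build everything on the factorization
\[
t_{dk}=t_d\,W_{d,k},\qquad W_{d,k}:=\sum_{i=0}^{k-1}(\alpha^d)^i(\beta^d)^{k-1-i},
\]
valid for all $d,k\ge1$; here $W_{d,k}$ is a symmetric polynomial in $\alpha^d,\beta^d$ with integer coefficients, hence an integer, and in particular $t_d\mid t_{dk}$. Throughout, $v_p$ denotes the $p$-adic valuation, so $|x|_p=p^{-v_p(x)}$. I would first dispose of the case $\ell\nmid n$: a short induction on $m$ using $t_m=rt_{m-1}+st_{m-2}$ and the coprimality of $\alpha+\beta$ and $\alpha\beta$ shows that $(t_m,s)=1$, hence $(t_m,t_{m+1})=1$, for every $m\ge1$; combined with the addition formula $t_{m+\ell}=t_mt_{\ell+1}+s\,t_{m-1}t_\ell$ this gives $t_{m+\ell}\equiv t_mt_{\ell+1}\pmod p$ with $p\nmid t_{\ell+1}$, so $p\mid t_m\iff p\mid t_{m+\ell}$. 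Since $p\mid t_\ell$ and $p\nmid t_1,\dots,t_{\ell-1}$, it follows that $p\mid t_n\iff\ell\mid n$, and in particular $|t_n|_p=1$ when $\ell\nmid n$.

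Now suppose $n=\ell k$. The key remark is that whenever $\ell\mid d$ one has $\alpha^d\equiv\beta^d\pmod p$ in $\BZ[\alpha,\beta]$, since $\alpha^d-\beta^d=t_d(\alpha-\beta)$ and $p\mid t_\ell\mid t_d$. Hence every summand of
\[
W_{d,k}-k(\beta^d)^{k-1}=\sum_{i=0}^{k-1}\bigl((\alpha^d)^i-(\beta^d)^i\bigr)(\beta^d)^{k-1-i}
\]
is divisible by $\alpha^d-\beta^d$, hence by $p$; and since $p\nmid\alpha\beta$ makes $\beta^d$ a unit mod $p$, we get $p\mid W_{d,k}\iff p\mid k$. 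Taking $d=\ell$ already settles the case $p\nmid k$ (in particular $p=2$ with $k$ odd, giving $|t_n|_2=|t_\ell|_2$). For general $k$, write $k=p^jm$ with $p\nmid m$ and apply the remark with $d=\ell p^j$ to get $v_p(t_{\ell k})=v_p(t_{\ell p^j})$; it thus remains to compute $v_p(t_{\ell p^j})$.

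For odd $p$ this is a lifting-the-exponent step: since $t_{\ell p^j}=t_{\ell p^{j-1}}W_{\ell p^{j-1},p}$, by induction it suffices to show $v_p(W_{d,p})=1$ whenever $\ell\mid d$. Writing $\alpha^d,\beta^d$ as the roots of $X^2-PX+Q$ with $P=\alpha^d+\beta^d$, $Q=(\alpha\beta)^d$, the integer $W_{d,p}$ is the $p$-th term of that Lucas sequence, so
\[
W_{d,p}=\frac{1}{2^{p-1}}\sum_{i=0}^{(p-1)/2}\binom{p}{2i+1}P^{p-1-2i}\Delta^i,\qquad\Delta:=P^2-4Q=(r^2+4s)\,t_d^2.
\]
As $p\mid t_d$ we have $p^2\mid\Delta$, while $P^2\equiv 4Q=4(\alpha\beta)^d\not\equiv0\pmod p$ forces $p\nmid P$; so the $i=0$ term has $v_p$ exactly $1$ and all later terms have $v_p\ge2$, whence $v_p(W_{d,p})=1$. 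Iterating gives $v_p(t_{\ell p^j})=v_p(t_\ell)+j$, and therefore $|t_n|_p=|t_\ell|_p\,|k|_p$.

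The case $p=2$ with $k$ even is where I expect the main difficulty to be, precisely in controlling the $2$-adic valuations through the lifting and at the borderline. Here $W_{M,2}=\alpha^M+\beta^M=:V_M$, so $t_{2M}=t_MV_M$; writing $k=2^jm$ with $m$ odd and $j\ge1$, the reduction above together with iteration of this doubling gives $v_2(t_{\ell k})=v_2(t_{\ell 2^j})=v_2(t_{2\ell})+\sum_{i=1}^{j-1}v_2(V_{\ell 2^i})$, so everything reduces to proving $v_2(V_M)=1$ whenever $2\ell\mid M$. For this I would use $V_M^2-(r^2+4s)t_M^2=4(\alpha\beta)^M$: the right side has $v_2$ exactly $2$ (as $2\nmid\alpha\beta$), and one checks that $(r^2+4s)t_M^2$ has strictly larger $v_2$, the only borderline configuration $2\nmid r^2+4s$ with $v_2(t_{2\ell})=1$ being impossible because the same identity at $M=\ell$ would then force $v_2(V_\ell)\ge1$, hence $v_2(t_{2\ell})=v_2(t_\ell)+v_2(V_\ell)\ge2$. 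Thus $v_2(V_M^2)=2$, so $v_2(V_M)=1$, and we obtain $v_2(t_{\ell k})=v_2(t_{2\ell})+v_2(k)-1$, i.e.\ $|t_n|_2=2|t_{2\ell}|_2|k|_2$. Finally, the rank $\ell$ exists by Proposition \ref{prop:1} and, by non-degeneracy, every $t_m$ appearing above is non-zero, so all these valuations are finite.
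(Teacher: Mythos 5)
The paper does not actually prove this proposition: its ``proof'' is a one-line citation of Lemma 8 of Stewart \cite{Stew}, which in turn rests on Carmichael's classical laws of apparition and repetition for Lucas sequences. Your write-up is therefore doing genuinely more work than the paper, and it checks out: it is a correct, self-contained reconstruction of essentially the argument that lies behind the citation. The skeleton is sound. The factorization $t_{dk}=t_d W_{d,k}$ with $W_{d,k}\in\BZ$, together with the congruence $W_{d,k}\equiv k(\beta^d)^{k-1}\pmod{p}$ for $\ell\mid d$ (valid because $\alpha^d-\beta^d=t_d(\alpha-\beta)$ and $\beta$ is a unit modulo $p$), correctly gives both the reduction of $n=\ell k$ to $n=\ell p^{v_p(k)}$ and the statement $p\mid W_{d,k}\iff p\mid k$; the preliminary facts $(t_m,s)=1$, $(t_m,t_{m+1})=1$ and the addition formula $t_{m+\ell}=t_mt_{\ell+1}+st_{m-1}t_\ell$ settle the case $\ell\nmid n$; and the lifting-the-exponent computation $v_p(W_{d,p})=1$ for odd $p$, using $\Delta=(r^2+4s)t_d^2\equiv 0\pmod{p^2}$ and $p\nmid P$, gives $v_p(t_{\ell p^j})=v_p(t_\ell)+j$ as needed. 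The case $p=2$, which you rightly identify as the delicate one, also works: since $s$ is odd, $v_2(r^2+4s)$ is either $0$ or at least $2$, and the identity $V_M^2-(r^2+4s)t_M^2=4(\alpha\beta)^M$ applied first at $M=\ell$ forces $v_2(t_{2\ell})\geq 2$ when $r$ is odd, after which $v_2(V_M)=1$ for all $M$ with $2\ell\mid M$ follows in both parity cases and yields $\abs{t_n}_2=2\abs{t_{2\ell}}_2\abs{k}_2$. What your version buys is a proof readable without consulting \cite{Stew} or \cite{Ca}; what the paper's citation buys is brevity, and the cited lemma is stated in exactly the form used later, so nothing is lost either way. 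The only cosmetic suggestion is to state explicitly, rather than leave to ``one checks,'' the observation that $v_2(r^2+4s)\in\{0\}\cup\{2,3,\dots\}$, since that is what makes your two-case analysis at $p=2$ exhaustive.
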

\begin{proof}
This is Lemma 8 of \cite{Stew} and it is based on work of
 Carmichael  \cite{Ca}, see also \cite{Stew78}.
\end{proof}

In addition to the results about Lucas sequences we need an estimate from below for the size of the $n$-th term of a non-degenerate binary recurrence sequence.
\begin{proposition}\label{prop:3}
Let $u_n$ be the $n$-th term of a non-degenerate binary recurrence sequence as in (\ref{eq:brs}). There exist positive numbers $c_0$ and $c_1$, which are effectively computable in terms of $a$ and $b$, such that for all $n > c_1$,
$$\abs{u_n} \geq \abs{\alpha}^{n-c_0\log n}.$$
\end{proposition}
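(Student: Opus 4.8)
The plan is to factor out the dominant term and reduce everything to a lower bound for $\abs{1+\tfrac{b}{a}(\beta/\alpha)^n}$. Assume, as we may, that $\abs{\alpha}\ge\abs{\beta}$, and set $\lambda=\beta/\alpha$ and $\mu=b/a$; by non-degeneracy $a,b\ne 0$, so $\mu$ is defined, $\abs{\lambda}\le 1$, and $\lambda$ is not a root of unity. Then $\abs{u_n}=\abs{a}\,\abs{\alpha}^n\,\abs{1+\mu\lambda^n}$. If $\abs{\lambda}<1$, then $\abs{\mu\lambda^n}\to 0$ geometrically, so there is an effectively computable threshold beyond which $\abs{1+\mu\lambda^n}\ge\tfrac12$, and then $\abs{u_n}\ge\tfrac{\abs{a}}{2}\abs{\alpha}^n\ge\abs{\alpha}^{n-c_0\log n}$ for all large $n$; this case needs nothing more. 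The substance is the case $\abs{\alpha}=\abs{\beta}$. Since $r=\alpha+\beta$ and $s=-\alpha\beta$ are rational integers, $\lambda$ can be neither $1$ (which would force $r^2+4s=0$) nor $-1$ (a root of unity), so $\abs{\alpha}=\abs{\beta}$ forces $\alpha,\beta$ to be complex conjugate and non-real, and $\lambda$ to lie on the unit circle.

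In this case, if $\abs{\mu}=\abs{b/a}\ne 1$ the triangle inequality already suffices: $\abs{1+\mu\lambda^n}\ge\bigl|\,1-\abs{\mu}\,\bigr|>0$ is a constant, so $\abs{u_n}\gg\abs{\alpha}^n$. (For a sequence with integer initial terms one in fact always has $b=\bar a$ here, so only $\abs{\mu}=1$ occurs, but the argument does not use this.) The one substantial case is $\abs{\lambda}=\abs{\mu}=1$; write $\lambda=e^{i\psi}$ and $-\mu=e^{i\phi}$, so that $\abs{1+\mu\lambda^n}=\abs{1-e^{i(n\psi+\phi)}}=2\bigl|\sin\tfrac{\theta_n}{2}\bigr|$, where $\theta_n=n\psi+\phi-2\pi k_n\in(-\pi,\pi]$ and $\abs{k_n}\ll n$. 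Taking principal branches, $i\theta_n=n\log\lambda+\log(-\mu)-2k_n\log(-1)$ is $i$ times an integer linear combination, with coefficients of size $O(n)$, of the logarithms of the algebraic numbers $\lambda,\,-\mu,\,-1$; it vanishes exactly when $\lambda^n=-a/b$, which can happen for at most one $n$ (two such values would make $\lambda$ a root of unity) --- discard that index by enlarging $c_1$. For every other $n$, the theory of linear forms in logarithms (Baker, Baker--W\"ustholz) gives an effectively computable $C$, depending only on $\lambda$ and $\mu$, with $\abs{\theta_n}\ge\exp(-C\log n)$; since $\abs{\sin x}\ge\tfrac{2}{\pi}\abs{x}$ for $\abs{x}\le\tfrac{\pi}{2}$, this yields $\abs{1+\mu\lambda^n}\ge\exp(-C\log n)$, hence
\[
\abs{u_n}\ \ge\ \abs{a}\,\abs{\alpha}^n\exp(-C\log n)\ \ge\ \abs{\alpha}^{\,n-c_0\log n}
\]
for all sufficiently large $n$, provided $c_0\log\abs{\alpha}\ge C$ (with a little extra to absorb the constant $\abs{a}$).

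To conclude, take $c_1$ to be the largest of the finitely many explicit thresholds arising above (including the one exceptional index where the relevant quantity vanishes) and $c_0$ accordingly; each is effectively computable. The only non-elementary ingredient is the lower bound for linear forms in logarithms used in the last case, and I expect that case --- quantifying how closely $n\psi+\phi$ can approach a multiple of $2\pi$ when $e^{i\psi}$ is algebraic but not a root of unity --- to be the crux. It is also what produces the $\log n$ term in the exponent: that is the shape of the Baker bound, and removing it would require genuinely different input.
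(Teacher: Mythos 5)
Your argument is correct and follows essentially the route the paper intends: the paper does not prove Proposition \ref{prop:3} but cites it as Lemma 6 of Stewart \cite{Stew}, noting that it is ``a consequence of Baker's theory of linear forms in logarithms,'' which is exactly the engine of your main case ($\abs{\lambda}=\abs{\mu}=1$), with the reduction to $\abs{1+\mu\lambda^n}$ and the elementary treatment of $\abs{\lambda}<1$ being the standard preliminaries. The one point you should make explicit is that the single exceptional index with $\lambda^n=-a/b$ is itself effectively bounded --- for instance because $n\,h(\lambda)=h(-a/b)$ for the absolute logarithmic height and $h(\lambda)>0$ since $\lambda$ is algebraic and not a root of unity --- so that absorbing it into $c_1$ is legitimate and keeps the constants effectively computable.
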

\begin{proof}
This is Lemma 6 in \cite{Stew} and is a consequence of Baker's theory of linear forms in logarithms.
\end{proof}

\section{The proof of Theorem \ref{thm:brs} }\label{sec:rs}
It suffices to prove the result under the assumption that $\alpha+\beta$ and $\alpha\beta$ are coprime or, equivalently, that $r$ and $s$ are coprime. We shall also suppose, without loss of generality, that
\begin{align*}
\abs{\alpha}\geq \abs{\beta}.
\end{align*}
In the following discussion, every $c_i$ will denote a positive number effectively computable in terms of $a,b, \alpha$ and $\beta$. For any prime $p$ let $[p]$ denote the principal ideal generated by $p$ in the ring of algebraic integers of $\BQ(\alpha)$. Put
\begin{align*}
a'=(\beta-\alpha)a, \ \ b'=(\beta-\alpha)b.
\end{align*}

Let $p$ be a prime which divides $\alpha\beta$ and let $\mathfrak{p}$ be a prime ideal which divides $[p]$. Then, since $\alpha+\beta$ and $\alpha\beta$ are coprime integers, $\mathfrak{p}$ divides either $[\alpha]$ or $[\beta]$. Thus, by (\ref{eq:brs}) for $m>c_1$ we have
\begin{align}\label{eq:BRS4}
\abs{u_m}_p\geq \abs{a'b'}_p.
\end{align}

It follows from Proposition \ref{prop:3} that $u_m$ is non-zero for $m>c_2$. Put
\begin{align*}
\gamma=1-1/\sqrt{2}.
\end{align*}
Then $\gamma N$ exceeds both $c_1$ and $c_2$ for $N>c_3$. For each positive integer $N$ with $N>c_3$ put
\begin{align*}
S = S(N) \coloneqq \prod_{\gamma N < n \leq N} u_n.
\end{align*}
Our proof proceeds by a comparison of estimates for $S$.

By Proposition \ref{prop:3} there exists $c_4$ such that 
\begin{align*}
\abs{S} &\geq \prod_{\gamma N < n \leq N} \abs{\alpha}^{n-c_4\log n}
\end{align*}
and so
\begin{align}\label{eq:dag3}
\abs{S} & \geq \abs{\alpha}^{\frac{(1-\gamma^2)N^2}{2} - c_5N\log N}.
\end{align}

Plainly,
\begin{align*}
\abs{S} = \prod_{p | S} \abs{S}_p^{-1}.
\end{align*}
We first estimate $\abs{S}_p^{-1}$ for primes $p$ which divide $\alpha\beta$. By (\ref{eq:BRS4}) we have
\begin{align*}
\abs{S}_p^{-1} \leq \abs{a^\prime b^\prime}_p^{-N}.
\end{align*}

We shall now estimate $\abs{S}_p^{-1}$ for primes $p$ which divide $S$ but do not divide $\alpha\beta$. For each such prime $p$ we let $n(p)$ be the smallest integer with $\gamma N < n(p) \leq N$ for which
$$\abs{u_{n(p)}}_p \leq \abs{u_n}_p \qquad \text{ for }\gamma N<n\leq N.$$
For positive integers $m$ and $r$ with $m\geq r$,
\begin{align}\label{eq:2stars}
u_m - \beta^{r} u_{m-r} =  a^\prime \alpha^{m-r} t_r.
\end{align}
Let $\abs{\ \ }_p$ denote an extension of $\abs{\ \ }_p$ from $\BQ$ to $\BQ(\alpha)$. For each integer $r$ with $1\leq r <n(p)-\gamma N$
\begin{align*}
|a'b't_r|_p \leq |a't_r|_p=|a'\alpha^{n(p)-r}t_r|_p
\end{align*}
and, by (\ref{eq:2stars}) with $m=n(p)$,
\begin{align*}
|a'\alpha^{n(p)-r}t_r|_p \leq \max(|u_{n(p)}|_p , |\beta^ru_{n(p)-r}|_p)
\end{align*}
Since $|\beta|_p =1$ ,
\begin{align*}
\max(|u_{n(p)}|_p , |\beta^ru_{n(p)-r}|_p)= \max(|u_{n(p)}|_p , |u_{n(p)-r}|_p)=|u_{n(p)-r}|_p
\end{align*}
and we deduce that
\begin{align*}
|a'b't_{r} |_p \leq |u_{n(p)-r}|_p
\end{align*}
for $1 \leq r < n(p)-\gamma N.$ Hence,
$$\abs{\prod_{\gamma N < n < n(p)} u_n}_p \geq \prod_{1\leq r < n(p)-\gamma N} \left(\abs{t_r}_p \abs{a^\prime b^\prime}_p \right).$$

Letting $\ell = \ell(p)$ be the smallest integer for which $p | t_\ell$, we have by Proposition \ref{prop:1} and Proposition \ref{prop:2} that if $p>2$,
\begin{align*}
\prod_{1\leq r< n(p) - \gamma N} \abs{t_r}_p &= \abs{t_\ell}_p^{s_1} \abs{s_1!}_p
\intertext{where $s_1 = \left\lfloor \frac{n(p) - \gamma N}{\ell} \right\rfloor$, while for $p=2$}
\prod_{1\leq r< n(2) - \gamma N} \abs{t_r}_2 &= \abs{t_\ell}_2^{s_1} \abs{\frac{t_{2\ell}}{t_\ell}}_2^{s_2} \abs{s_2!}_2
\end{align*}
with $s_2 = \left\lfloor \frac{n(2) -\gamma N}{2\ell} \right\rfloor$.

Next, on setting $m-r= n(p)$ and letting $r$ run over those integers such that $n(p) + r \leq N$, we find that for $p>2$
\begin{align*}
\prod_{n(p) < n \leq N} \abs{u_n}_p &\geq \abs{t_\ell}_p^{s_3} \abs{s_3!}_p \abs{a^\prime b^\prime}_p^{N-n(p)}
\intertext{while for $p=2$}
\prod_{n(2) < n \leq N} \abs{u_n}_2 &\geq \abs{t_\ell}_2^{s_4}\abs{\frac{t_{2\ell}}{t_\ell}}_2^{s_4} \abs{s_4!}_2 \abs{a^\prime b^\prime}_p^{N-n(2)}
\end{align*}
where
$$s_3 = \left\lfloor \frac{N- n(p)}{\ell} \right\rfloor \quad \text{ and } \quad s_4 = \left\lfloor \frac{N - n(2)}{2\ell} \right\rfloor.$$

Putting all this together gives, for $p>2$,
\begin{align*}
\abs{S}_p^{-1} &\leq \abs{t_\ell}_p^{-s} \abs{s!}_p^{-1} \abs{a^\prime b^\prime}_p^{-N} \abs{u_{n(p)}}_p^{-1}
\intertext{where $s = \left\lfloor \frac{N-\gamma N}{\ell} \right\rfloor$. As $\abs{t_\ell}_p^{-1} \leq \abs{t_\ell} \leq 2 \abs{\alpha}^\ell$, we find that}
\abs{S}_p^{-1} &\leq 2^{\frac{N}{\ell(p)}} \abs{\alpha}^{N-\gamma N} \abs{N!}_p^{-1} \abs{a^\prime b^\prime}_p^{-N} \abs{u_{n(p)}}_p^{-1}
\intertext{for $p>2$. For $p=2$ we have}
\abs{S}_2^{-1} &\leq 4^{\frac{N}{\ell(2)}} \abs{\alpha}^{2(N-\gamma N)} \abs{N!}_2^{-1} \abs{a^\prime b^\prime}_2^{-N} \abs{u_{n(2)}}_2^{-1}.
\end{align*}

Putting $T = \omega(S)$, we may suppose $T < N$ for otherwise we are done. Inserting the above estimates, we obtain
\begin{align}\label{eq:ddag}
S = \prod_{p | S} \abs{S}_p^{-1} \leq \left( \prod_{p|S} 4^{\frac{N}{\ell(p)}}\right) \abs{\alpha}^{(N-\gamma N) (T+1)} N! \abs{a^\prime b^\prime}^N \prod_{p|S} \abs{u_{n(p)}}_p^{-1}.
\end{align}

We need to estimate the right hand side and compare it with (\ref{eq:dag3}).
\begin{align*}
\prod_{p|S} 4^{\frac{N}{\ell(p)}} &\leq \prod_{\substack{p|S\\p < T/\log T} } 4^N \cdot \prod_{\substack{p|S\\p>T/\log T}} 4^{\frac{N}{\ell(p)}}\\
&\leq 4^{NT/\log T} \cdot \prod_{\substack{p|S\\p> T/\log T}} 4^{\frac{N}{\ell(p)}}.
\end{align*}
However, by Proposition \ref{prop:1},
$$\ell(p) \geq \frac{\log p - \log 2}{\log \abs{\alpha}} > \frac{\log T - \log \log T - \log 2}{\log \abs{\alpha}}.$$
As $\abs{\alpha} \geq \sqrt{2}$, we deduce
$$\prod_{p|S} 4^{\frac{N}{\ell(p)}} < e^{c_8 N^2 / \log N}.$$
Inserting this in inequality (\ref{eq:ddag}) and using $N! \leq N^N$, we get
\begin{align*}
\prod_{p | S} \abs{S}_p^{-1} < e^{c_9 N^2 / \log N} \abs{\alpha}^{N(1-\gamma)T} \prod_{p|S} \abs{u_{n(p)}}_p^{-1}.
\end{align*}

For each $n$, we have $\abs{u_n} \leq (\abs{a}+\abs{b})\abs{\alpha}^n$, since $\abs{\alpha} \geq \abs{\beta}$. Put
$$K \coloneqq \left\{n(p) : p| S\right\}.$$
Then, $\abs{K} \leq T$. Thus,
\begin{align*}
\prod_{p|S} \abs{u_{n(p)}}_p^{-1} &\leq \prod_{k\in K} \abs{u_k}\\
 &\leq \prod_{k\in K} \left(\abs{a} + \abs{b} \right) \abs{\alpha}^k\\
&\leq \left(\abs{a}+\abs{b} \right)^{T} \abs{\alpha}^{NT - \frac{T(T-1)}{2}}.
\end{align*}
 Putting everything together, we get
$$\prod_{p | S} \abs{S}_p^{-1} \leq e^{c_{10} N^2 / \log N} \abs{\alpha}^{(2-\gamma)NT-\frac{T^2}{2}}$$
and as $\abs{\alpha} \geq \sqrt{2}$, we get from (\ref{eq:dag3})
$$\abs{\alpha}^{\frac{N^2(1-\gamma^2)}{2}} < e^{c_{11} N^2 / \log N} \abs{\alpha}^{(2-\gamma)NT-\frac{T^2}{2}}.$$
Therefore $T>(1-1/\sqrt{2} - \epsilon) N$  for $N> c_{12}$ since the roots of the quadratic $x^2-(4-2\gamma)x+1-\gamma^2$ are $\gamma$ and $\gamma +2 \sqrt{2}$.

\section{The proof of Theorem \ref{thm:brs1} }\label{sec:rs88}
Let  $\left\{t_n\right\}_{n=1}^{\infty}$ be a non-degenerate Lucas sequence with $n$-th term given by (\ref{eq:LUC10}). We may assume, without loss of generality that $\alpha+\beta$ and $\alpha\beta$ are coprime.  A primitive divisor of $t_n$ is a prime $p$ which divides $t_n$ but does not divide $(\alpha-\beta)^2t_2 \cdots t_{n-1}.$ In \cite{Stew2} Stewart showed that there are only finitely many Lucas sequences , with $\alpha+\beta$ and $\alpha\beta$ coprime, for which $t_n$ does not possess a primitive divisor when $n>4$ and $n\neq 6$ and these sequences may be explicitly determined. It then follows that the number of distinct prime factors of $ \prod_{\substack{ n=1}}^N t_n $ is at least $N-5$ whenever $\left\{t_n\right\}_{n=1}^{\infty}$ is not an exceptional sequence. Bilu, Hanrot and Voutier \cite{BHV} determined the complete list of exceptional sequences and by examining the list we see that whenever $\left\{t_n\right\}_{n=1}^{\infty}$ is a non-degenerate Lucas sequence
\begin{align*}
\omega \left( \prod_{\substack{ n=1}}^N t_n \right) \geq N-9,
\end{align*}
with equality holding when $t_n$ satisfies
\begin{align*}
t_n=t_{n-1}-2t_{n-2} \,\,\,for \,\,\,n=2,3,...
\end{align*}
and $N = 30, 31, 32, 33 \,\text{ or }\,\,34$.

\section{Proof of Theorem \ref{thm2}} \label{sec:thm2}
First, notice that the set of interest
\begin{align*}
S_x &= \left\{p \leq x\text{ prime }: b\bmod{p} \in \langle a \bmod{p} \rangle \subset \BF_p^\ast \right\}
\intertext{can be expressed as}
S_x &= \{p \leq x \text{ prime }: p| (a^n-b) \text{ for some }n\}.
\end{align*}
Suppose that $p$ divides $a^n-b$ with $n\leq \left\lfloor \frac{\log x}{\log a}\right\rfloor =:N$. Then, $p \leq a^n-b < a^n \leq x$.

Therefore, it is clear that 
$$\#S_x \gg \#\{p \text{ prime } : p| (a^n-b) \text{ for some }n \leq N\}.$$

Consider the binary recurrence sequence given by $u_n = a^n-b$ (here $\alpha,\beta, a$ and $b$ in (\ref{eq:brs}) are respectively $a$, 1, 1 and $b$). Then, by Theorem \ref{thm:brs},
$$\#\left\{ p : p | a^n-b \text{ for some } n\leq N\right\} \gg N$$
for $N = \left\lfloor \frac{\log x}{\log \abs{a}} \right\rfloor$, and so 
$$\#\left\{ p \leq x : p | a^n-b \text{ for some } n\right\} \gg \log x.$$

\section{Theorem \ref{thm1} via the greatest prime factor of terms of recurrence sequences }\label{sec:rs8}
The first proof uses the following result by Stewart about the growth of the largest prime divisor in a type of recurrence sequence.

 For any integer $n$ let $P(n)$ denote the greatest prime factor of $n$ with the convention that $P(0)=P(1)=P(-1).$
\begin{theorem}[Stewart \cite{Stew19}] \label{thm:stewart}
Let $u_n$, as in (\ref{eq:brs}), be the $n$-th term of a non-degenerate binary recurrence sequence. There exists a positive number $C$, which is effectively computable in terms of $a,b,\alpha$ and $\beta$, such that for $n>C$
$$P(u_n)> \sqrt{n}\exp( \log n/ 104\log \log n).$$
\end{theorem}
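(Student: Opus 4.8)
The plan is to argue by contradiction. Set $y = \sqrt n\,\exp(\log n/104\log\log n)$ and suppose $P(u_n)\le y$, so that $u_n$ is $y$-smooth, and aim to contradict the lower bound $\abs{u_n}\ge \abs{\alpha}^{n-c_0\log n}$ of Proposition \ref{prop:3}. The entire weight of the argument rests on bounding, for each prime $p\le y$, the exact power of $p$ dividing $u_n$, and the tool for this is the $p$-adic theory of linear forms in logarithms (Yu's theorem). Writing $u_n = b\beta^n\bigl(1+(a/b)(\alpha/\beta)^n\bigr)$, one sees for $p\nmid\alpha\beta$ that $v_p(u_n)$ equals, up to a bounded tame contribution, the $\mf p$-adic valuation of the linear form $\Lambda = n\log_p(\alpha/\beta)-\log_p(-b/a)$; since $\alpha/\beta$ is not a root of unity, $\Lambda\neq 0$, and Yu's estimate yields $v_p(u_n)\ll_{\alpha,\beta} C(p)\log n$ with the dependence on $p$ made fully explicit.

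First I would reduce the bookkeeping to the associated Lucas sequence $t_n = (\alpha^n-\beta^n)/(\alpha-\beta)$ via the identity $u_m-\beta^r u_{m-r}=a'\alpha^{m-r}t_r$ together with Propositions \ref{prop:1} and \ref{prop:2}, which describe precisely how $p$ enters the terms: a prime $p\nmid\alpha\beta$ first divides $t_\ell$ at its rank of apparition $\ell=\ell(p)$, with $\tfrac{\log p-(\log 2)/2}{\log\abs{\alpha}}\le \ell\le p+1$, and thereafter $v_p(t_{\ell k})=v_p(t_\ell)+v_p(k)$ (with the stated modification at $p=2$). Summing $\log\abs{u_n}=\sum_{p\mid u_n}v_p(u_n)\log p$ and inserting these formulas splits the total into a \emph{primitive-part} contribution $\sum_p v_p(t_{\ell(p)})\log p$, bounded termwise by Yu, and a \emph{$p$-adic index} contribution $\sum_p v_p(k_p)\log p\le \sum_p v_p(n)\log p=\log n$, which is negligible.

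The heart of the matter is to show that, under the smoothness hypothesis $p\le y$, the primitive-part sum is too small to account for $\log\abs{u_n}\gg n$. Here I would combine Yu's explicit bound with the prime-counting input: the number of primes up to $y$ is controlled by the Prime Number Theorem, while the rank constraint of Proposition \ref{prop:1} forces large primes to have large rank, which both limits how many primes of each size can occur and controls how their valuations aggregate. Optimizing the free parameter $y$ against the competing lower bound $\abs{\alpha}^{n-c_0\log n}$, and feeding in the sharpest available explicit value of the constant in Yu's $p$-adic estimate, is exactly what produces the numerical factor $1/104$ and the exponent $\tfrac12$. The loss from the clean exponent $1$ (available for the Lucas numbers themselves) to $\tfrac12$ here reflects the fact that for a genuine two-term recurrence the congruence $p\mid u_n$ pins $n$ only to a residue class modulo $\ord_{\mf p}(\alpha/\beta)$ rather than to a divisibility relation $\ell\mid n$, so one must count admissible indices by a gap-principle argument of the type used in Section \ref{sec:rs}, costing a square root.

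I expect the main obstacle to be quantitative rather than structural: the strength of the conclusion lives entirely in the explicit constants, so the argument succeeds only if Yu's estimate is used in its sharpest explicit form and every constant is tracked through the summation over $p\le y$ and the optimization of $y$. A secondary technical nuisance is the careful treatment of the finitely many primes dividing $\alpha\beta$, the ramified and $p=2$ cases of Proposition \ref{prop:2}, and the degree-$2$ factors $C(p)$ arising from working in the quadratic field $\BQ(\alpha)$ rather than over $\BQ$.
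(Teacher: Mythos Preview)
The paper does not prove this statement at all: Theorem~\ref{thm:stewart} is quoted from Stewart~\cite{Stew19} and used as a black box in Section~\ref{sec:rs8}. There is therefore no ``paper's own proof'' to compare your proposal against.

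That said, your sketch is in the right spirit for how the result is actually established in~\cite{Stew19}. The engine is indeed the $p$-adic theory of linear forms in logarithms (Yu's theorem) applied to $\Lambda = n\log_p(\alpha/\beta) - \log_p(-b/a)$, combined with the archimedean lower bound of Proposition~\ref{prop:3}, and the constant $1/104$ does come from inserting the sharpest explicit form of Yu's bound and optimizing. Two remarks on your write-up, however. First, the detour through the Lucas sequence and Propositions~\ref{prop:1}--\ref{prop:2} is not how Stewart organizes the argument: he bounds $v_p(u_n)$ directly from Yu's estimate rather than reducing to the rank-of-apparition combinatorics, so your ``primitive part plus index part'' decomposition is extra scaffolding that the actual proof does not need. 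Second, your heuristic for the loss from $n$ (in the Lucas case) to $\sqrt{n}$ (in the general case) is not quite the mechanism in~\cite{Stew19}; the square root arises in the final optimization from balancing $\sum_{p\le y} v_p(u_n)\log p$, where Yu gives $v_p(u_n)\ll (p/\log p)\log n$, against the lower bound $n\log\abs{\alpha}$, not from a gap-principle count of admissible residues. Your proposal is a reasonable outline, but several of the structural claims about \emph{why} the constants come out as they do would not survive a careful write-up.
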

We actually need a special case of this result. Note that for $\alpha = 1$, $x=a$, $\beta = b$ and $y=1$, the above theorem yields
$$P(a^n-b) \gg_{a,b} \sqrt{n}\exp( \log n/ 104\log \log n).$$
This is what we will be using.

\begin{proof}[Proof of Theorem \ref{thm1}]
We will prove the theorem for the case $a,b > 0$ for simplicity. The proof can be easily adapted to the general case. See the remark for more details. Again, let
$$S_x = \left\{p \leq x\text{ prime }: b\bmod{p} \in \langle a \bmod{p} \rangle \subset \BF_p^\ast \right\}.$$

Using the same argument as in section \ref{sec:thm2}, we have
$$\#S_x \gg \#\{p \text{ prime } : p| (a^n-b) \text{ for some }n \leq N\}.$$
for $N \coloneqq \left\lfloor \frac{\log x}{\log a}\right\rfloor$.

Consider the sequence $\xi_n = a^n-b$ for $N-y \leq n \leq N$ where $y$ is a parameter to be chosen later. As noted above, $p| \xi_n$ in this range implies $p\leq x$. Now consider $P(a^n-b)$, the largest prime factor of $a^n-b$, for each of those $n$. Those yield $y$ primes, albeit a priori not necessarily distinct.

Suppose that for some $m,n$ with $N-y \leq m < n \leq N$ we have $$P(a^n-b) = P(a^m-b) =:q.$$ Then,
\begin{align*}
a^n&\equiv b \bmod q\\
a^m&\equiv b \bmod q\\
\Rightarrow a^{nm} &\equiv b^n \equiv b^m \bmod q
\end{align*}
and thus $q$ divides $b^n - b^m$.

From Theorem \ref{thm:stewart}, we know that $q$ exceeds $b$ for $x$ large enough, and so $q$ does not divide $b$. We conclude that $q | (b^{n-m}-1)$. In particular, we have that $q \leq b^{n-m} -1 < b^{n-m}$. However, $n-m \leq y$, and so choosing $y = \frac{\log \left(C_1 \sqrt{N}\right)}{\log b}$ yields
$$P(a^n-b) = q < b^{n-m} \leq C_1 \sqrt{N}$$
which is a contradiction to Theorem \ref{thm:stewart} for properly chosen $C_1$.

We therefore have $y$ distinct primes in the set $S_x$, where
$$y = \frac{1}{2 \log b} \left(\log \log x - \log\log\log x\right) + C^\prime \gg \log\log x.$$
\end{proof}

\section{Theorem \ref{thm1} via Thue equations}\label{sec:BS}
The second proof of Theorem \ref{thm1} uses a result on Thue equations. Recall that a Thue equation is an equation of the form
$$F(x,y) = h$$
where $F(x,y) = a_0 x^r + a_1 x^{r-1}y + \cdots + a_r y^r$ is an integral binary form of degree at least $3$. We therefore have the following result for the number of solutions to such an equation.

\begin{theorem}[Bombieri, Schmidt \cite{BoSc}]
Let $F(x,y)$ be an irreducible binary form of degree $r\geq 3$ with rational integral coefficients. The number of primitive solutions of the equation
$$\abs{F(x,y)} =h$$
does not exceed
$$c_1 r^{t+1}$$
where $c_1$ is an absolute constant and $t$ is the number of distinct prime factors of $h$.
\label{thm:B-S}
\end{theorem}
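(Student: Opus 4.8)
Since this is the theorem of Bombieri and Schmidt, a self-contained proof would have to reproduce (a streamlined form of) their argument, which splits into a \emph{descent} reducing $|F(x,y)| = h$ to the ``unit'' equations $|G(x,y)| = 1$ for a controlled family of auxiliary forms $G$, and an \emph{approximation} step bounding the number of primitive solutions of each $|G(x,y)| = 1$ by $O(r)$. The plan is to carry these out in that order and then multiply the two estimates.

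\medskip
\textbf{Descent.} Call two primitive solutions $(x_0,y_0)$, $(x_1,y_1)$ of $|F(x,y)| = h$ equivalent if $x_0 y_1 - x_1 y_0 \equiv 0 \pmod{h}$. The first task is to bound the number of equivalence classes. Fix a prime power $p^e \parallel h$. A primitive solution forces $p^e \mid F(x,y)$, and since $\gcd(x,y)=1$ the point $(x:y)$ must reduce $p$-adically into a small neighbourhood of one of the at most $r$ zeros of $F$ in $\mathbb{P}^1(\overline{\mathbb{Q}}_p)$; a Hensel/Newton-polygon analysis turns this into a bound of at most $r$ admissible residue classes $(x:y) \bmod p^e$. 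Multiplying over the $t = \omega(h)$ primes via the Chinese Remainder Theorem gives at most $r^{t}$ classes. Then, inside a fixed class, one picks a base solution $(x_0,y_0)$, completes it to a $\mathbb{Z}$-basis of $\mathbb{Z}^2$, and applies the corresponding unimodular change of variables to $F$; the transformed form takes the value $\pm h$ at the base solution, and one checks that it is in fact divisible by $h$ along the whole class, so that $G := \tfrac{1}{h}(\text{transformed }F)$ is an integral, irreducible binary form of degree $r$ with $|G(x',y')| = 1$ at the images of all solutions in that class. Hence the number of primitive solutions of $|F(x,y)| = h$ is at most $r^{t}$ times the maximum, over all such $G$, of the number of primitive solutions of $|G(x,y)| = 1$.

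\medskip
\textbf{Approximation.} It remains to show that a degree-$r$ irreducible integral form $G$ has at most $c_2 r$ primitive solutions of $|G(x,y)| = 1$ for an absolute $c_2$. Writing $G(x,y) = a_0 \prod_{i=1}^{r}(x - \theta_i y)$, solutions with $|y|$ below an explicit threshold are counted directly by a geometry-of-numbers argument and number $O(r)$. For a solution with $|y|$ above the threshold, exactly one factor $|x - \theta_i y|$ is small, so $x/y$ is an exceptionally good rational approximation to $\theta_i$; assign the solution to that index. For each fixed $i$, the assigned solutions are controlled by a \emph{gap principle} --- from $|G(x,y)|=1$ one gets $|x/y - \theta_i| \asymp |y|^{-r}$, whence two such approximations with denominators $q < q'$ satisfy $q' \gg q^{r-1}$ --- together with the quantitative Thue--Siegel (hypergeometric) method in the sharpened form used by Bombieri, which caps the number of ``large'' approximations to a single $\theta_i$ by an absolute constant. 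Each root thus contributes $O(1)$ solutions, $r$ roots contribute $O(r)$, and adding the small solutions gives $c_2 r$. Combining with the descent yields the bound $c_1 r^{t+1}$.

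\medskip
\textbf{Main obstacle.} The crux is the approximation step: the naive Thue--Siegel argument bounds the number of good approximations to each root only in terms of $r$ (or worse), and it is precisely Bombieri's refinement of the Thue--Siegel principle --- with a careful construction of the auxiliary polynomial and a sharp lower bound for the ``distance'' between two very good approximations --- that reduces the per-root contribution to an \emph{absolute} constant. A secondary difficulty is keeping the descent constant absolute: the $p$-adic count of classes must be pushed to at most $r$ per prime (rather than $c\cdot r$, which would produce an inadmissible factor $c^{t}$), and this requires handling the primes dividing $\mathrm{disc}(F)$ with some care rather than absorbing them crudely.
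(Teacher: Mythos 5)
This theorem is quoted from Bombieri and Schmidt \cite{BoSc} and is used in the paper purely as a black box; the paper contains no proof of it, so there is no internal argument to compare yours against. Your outline does correctly describe the architecture of the original proof: a descent partitioning the primitive solutions of $\abs{F(x,y)}=h$ into at most $r^{t}$ classes, each carried by a change of variables into the solution set of a unit equation $\abs{G(x,y)}=1$, followed by an $O(r)$ bound for the number of primitive solutions of that unit equation.

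As written, however, this is a roadmap rather than a proof, and two steps in particular are asserted without being established. First, in the descent, the claim that each prime power $p^e\parallel h$ contributes at most $r$ admissible classes is not a routine Hensel/Newton-polygon matter: when $F$ has repeated factors modulo $p$, the number of points of $\mathbb{P}^1(\BZ/p^e\BZ)$ at which $F$ vanishes modulo $p^e$ can greatly exceed $r$, and the correct count comes from the lattice formulation (associating to a base solution $(x_0,y_0)$ the sublattice $\{(x,y): xy_0-yx_0\equiv 0 \pmod{h}\}$ of index $h$, on which $F/h$ is integral) together with a genuinely careful treatment of the primes dividing the discriminant --- exactly the point you flag as a "secondary difficulty" but do not resolve. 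Second, and more seriously, the entire approximation step --- the absolute, $r$-independent bound on the number of large solutions attached to each root --- is deferred to "the quantitative Thue--Siegel method in the sharpened form used by Bombieri." That refinement \emph{is} the content of the theorem; without constructing the auxiliary polynomial and proving the non-vanishing and gap estimates, nothing has been shown, and your own closing paragraph concedes as much. If you intend to use the result as the paper does, the appropriate course is simply to cite \cite{BoSc}; if you intend to prove it, the approximation step must actually be carried out.
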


We now proceed with our second proof of Theorem \ref{thm1}. For this particular proof, we require the extra condition that $a$ and $b$ are coprime. However, this condition is not too restrictive and we believe the proof to still have its merits.

\begin{proof}[Proof of Theorem \ref{thm1}]
Suppose that $(a,b)=1$. As in the previous proof, notice that 
$$S_x = \{p \leq x \text{ prime }: p| (a^n-b) \text{ for some }n\}.$$
Fix $x$. Then, again,
\begin{align}\label{eq:k1}
\#S_x \gg \#\{p \text{ prime } : p| (a^n-b) \text{ for some }n \leq N\}
\end{align}
where $N:=\left\lfloor\frac{\log x}{\log a}\right\rfloor$. Denote by $k$ the quantity on the right hand side of (\ref{eq:k1}).

Since there are at most $k$ primes dividing the numbers $a^n-b$ with $n$ varying, we can write
$$a^n-b = p_1^{\alpha_1(n)}p_2^{\alpha_2(n)}\cdots p_k^{\alpha_k(n)}$$
with $p_i$ distinct primes, and $\alpha_i(n) = \ord_{p_i}(a^n-b)$.

For every fixed $n$, we have
\begin{align*}
a^n - p_1^{\alpha_1(n)}p_2^{\alpha_2(n)}\cdots p_k^{\alpha_k(n)} &= b\\
a^{\delta} a^{3j} - p_1^{\epsilon_1}\cdots p_k^{\epsilon_k} p_1^{3j_1}\cdots p_k^{3j_k} &= b\\
\intertext{where $\delta$ and $\epsilon_i$ are the residue of $n$ and $\alpha_i(n)$ modulo 3 respectively ($\delta, \epsilon_i\in \{0,1,2\}$), so we obtain the equation}
a^{\delta} \left(a^{j}\right)^{3} - \left(p_1^{\epsilon_1}\cdots p_k^{\epsilon_k}\right) \left(p_1^{j_1}\cdots p_k^{j_k}\right)^{3} &= b.
\end{align*}
As $n$ varies, we obtain at most $3^{k+1}$ different equations of the form
$$a^{\delta} X^3 - \left(p_1^{\epsilon_1}\cdots p_k^{\epsilon_k}\right) Y^3 = b.$$
Also, every single $n\leq N$ gives a different solution to one of those equations. All the solutions are primitive since $(a,b)=1$. Therefore, one of them has at least $\frac{N}{3^{k+1}}$ solutions.

Let $C = c_1 3^{1+t}$ where $t$ is the number of prime factors of $b$, and $c_1$ is the constant appearing in Theorem \ref{thm:B-S}. Then, $\frac{N}{3^{k+1}}>C$ would be a contradiction to Theorem \ref{thm:B-S}, and so we have that 
\begin{align*}
\frac{N}{3^{k+1}} &\leq C\\
\Rightarrow N &\ll 3^{k}\\
\Rightarrow \log N &\ll k.
\intertext{Recall from the definition of $N$ that $N\gg \log x$, and so}
\log \log x &\ll_{a,b} k
\end{align*}
which completes the proof.
It is worth noting that the dependence on $a$ and $b$ can easily be made explicit as
$$k \gg \log \log x - \log \log a - \omega(b)$$
where $\omega(b)$ denotes the number of distinct prime factors of $b$, and the implicit constant is absolute.
\end{proof}

\section{Theorem \ref{thm1} via Mumford's gap principle}\label{sec:mgp}
This proof uses Mumford's theorem about counting points on curves using a height 
function.
\begin{theorem}[Mumford \cite{HiSi}, \cite{Mum}]\label{mumthm}
Let $C/K$ be a curve of genus $g\geq 2$ defined over a number field. Then there is a constant $c$ depending on $C/K$ and the height function $H$ used, such that
$$\#\{P\in C(K) : H(P) \leq T\} \leq c \log\log T$$
for all $T\geq e^{e}$, where $H$ is a fixed multiplicative height function on $C$.
\end{theorem}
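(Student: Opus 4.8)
The plan is to reduce counting rational points on $C$ to an angular–separation argument for lattice points in the Mordell--Weil group of the Jacobian, where Mumford's gap principle forces the points to grow geometrically inside narrow cones. Passing to a finite extension only enlarges $C(K)$, so for the upper bound I may assume $C(K)\neq\emptyset$ and fix a base point $P_0\in C(K)$. I would then use the Abel--Jacobi embedding $j\colon C\hookrightarrow J$, $P\mapsto x_P:=[(P)-(P_0)]$, into the Jacobian $J=\mathrm{Jac}(C)$. By the Mordell--Weil theorem $J(K)$ is finitely generated, so $V:=J(K)\otimes_{\BZ}\BR$ is a finite-dimensional real vector space. On $J$ I take the N\'eron--Tate canonical height $\hat h=\hat h_\Theta$ attached to a symmetric ample theta divisor $\Theta$; it is a positive definite quadratic form on $V$, and I write $\langle x,y\rangle$ for the associated inner product and $\|x\|=\hat h(x)^{1/2}$. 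Since $j^{*}\Theta$ is a divisor of degree $g$ on $C$, functoriality of the height machine together with the $O(1)$ gap between naive and canonical heights gives the comparison
\[
\hat h(x_P)=c_0\,\log H(P)+O\!\left(\sqrt{\log H(P)}+1\right),\qquad c_0=g/\deg D>0,
\]
where $D$ is the ample divisor defining $H$. In particular $H(P)\le T$ forces $\|x_P\|\le c_1\sqrt{\log T}$.

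The geometric heart of the argument is Mumford's inequality: there are constants $c_2,c_3$ depending only on $C/K$ such that for all \emph{distinct} $P,Q\in C(K)$,
\[
\langle x_P,x_Q\rangle\le\frac{1}{2g}\left(\|x_P\|^2+\|x_Q\|^2\right)+c_2\big(\|x_P\|+\|x_Q\|\big)+c_3.
\]
I would obtain this from intersection theory on $J$: the image $j(C)$ represents the class $[\Theta]^{g-1}/(g-1)!$ and satisfies $j(C)\cdot\Theta=g$, which lets one compute $\hat h(x_P-x_Q)$ in terms of $\|x_P\|^2$, $\|x_Q\|^2$ and a correction attached to the point $(P,Q)$ on $C\times C$; expanding $\|x_P-x_Q\|^2=\|x_P\|^2-2\langle x_P,x_Q\rangle+\|x_Q\|^2$ and inserting the lower bound for $\hat h(x_P-x_Q)$ yields the displayed estimate. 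After normalising, this says that any two distinct points of large height make an angle $\theta$ with $\cos\theta\le 1/g+o(1)$, so their directions $x_P/\|x_P\|$ are uniformly separated on the unit sphere of $V$.

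To count, I convert angular separation into a radial gap. Fix a half-angle $\phi$ small enough that $g\cos(2\phi)>1$ (possible exactly because $g\ge2$) and cover the unit sphere of $V$ by finitely many cones of half-angle $\phi$; the number of cones depends only on $\dim V$, hence on $C/K$. If $x_P,x_Q$ lie in one cone with $\|x_Q\|\ge\|x_P\|\ge\rho_0$ (all but finitely many points), the angle between them is at most $2\phi$, so $\langle x_P,x_Q\rangle\ge\|x_P\|\|x_Q\|\cos(2\phi)$; combining with Mumford's inequality and setting $t=\|x_Q\|/\|x_P\|\ge1$ gives, for $\rho_0$ large,
\[
\tfrac{1}{2g}\,t^2-\cos(2\phi)\,t+\tfrac{1}{2g}\ge0,
\]
whose roots have product $1$, forcing $t\ge\lambda:=g\cos(2\phi)+\sqrt{g^{2}\cos^{2}(2\phi)-1}>1$. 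Thus inside each cone the norms of successive points grow at least by the fixed ratio $\lambda$, so a cone contains at most $O\big(\log(\sqrt{\log T}/\rho_0)\big)=O(\log\log T)$ points with $\|x_P\|\le c_1\sqrt{\log T}$. Summing over the finitely many cones and adding the finitely many points with $\|x_P\|<\rho_0$ gives $\#\{P\in C(K):H(P)\le T\}\le c\log\log T$ for $T\ge e^{e}$, as claimed.

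The hard part will be Mumford's inequality with its explicit coefficient $\tfrac{1}{2g}<\tfrac12$ and controlled error terms: this is exactly where $g\ge2$ enters, since the resulting angular bound $\cos\theta\le 1/g$ is vacuous for $g=1$ but gives genuine separation for $g\ge2$, and it rests on the intersection-theoretic description of the class of $j(C)$ in $J$ together with the comparison between naive and canonical heights. Everything after that inequality is elementary Euclidean geometry and a geometric-series count.
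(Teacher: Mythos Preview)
The paper does not give its own proof of this theorem; it is quoted from the literature (Hindry--Silverman and Mumford's original paper) and used as a black box in Section~\ref{sec:mgp}. Your proposal reproduces precisely the standard proof one finds in those references: embed $C$ in its Jacobian via Abel--Jacobi, compare the given height with the N\'eron--Tate height, apply Mumford's inequality $\langle x_P,x_Q\rangle\le\frac{1}{2g}(\|x_P\|^2+\|x_Q\|^2)+O(\|x_P\|+\|x_Q\|)$ to force angular separation when $g\ge2$, cover the unit sphere by finitely many cones, and observe that within each cone the norms grow at least geometrically. The outline is correct, and in particular your quadratic-in-$t$ step with $\lambda=g\cos(2\phi)+\sqrt{g^{2}\cos^{2}(2\phi)-1}>1$ is the right way to extract the gap. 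So there is nothing to compare: you have supplied exactly the argument the paper defers to its citations.
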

It is important to note that we can make the constant $c$ in Theorem \ref{mumthm} depend only on the field $\overline{K}$. As such, we can apply the theorem to quadratic twists of the same curve with the same constant for each of them. See \cite[Lemma 5]{LeMu} for a proof of this fact.

\begin{proof}[Proof of Theorem \ref{thm1}]
The general idea of this proof is similar to that of section \ref{sec:BS}. As before,
\begin{align} \label{eq:k}
\#S_x \gg \#\{p \text{ prime } : p| (a^n-b) \text{ for some }n \leq N\}
\end{align}
where $N\coloneqq\left\lfloor\frac{\log x}{\log a}\right\rfloor$. Denote by $k$ the quantity on the right hand side of (\ref{eq:k}).

Again, write
$$a^n-b = p_1^{\alpha_1(n)}p_2^{\alpha_2(n)}\cdots p_k^{\alpha_k(n)}$$
with $p_i$ distinct primes, and $\alpha_i(n) = \ord_{p_i}(a^n-b)$.
This time, we consider only the $n$ divisible by 5, and write
\begin{align*}
a^{5j} - p_1^{\epsilon_1}\cdots p_k^{\epsilon_k} p_1^{2j_1}\cdots p_k^{2j_k} &= b\\
\intertext{where $\epsilon_i$ are the residue of $\alpha_i(n)$ modulo 2, so we obtain the equation}
 \left(p_1^{\epsilon_1}\cdots p_k^{\epsilon_k}\right) \left(p_1^{j_1}\cdots p_k^{j_k}\right)^{2}&= \left(a^{j}\right)^{5} - b.
\end{align*}

Now, consider the curve given by the equation
$$C_b: Y^2 = X^5-b.$$
We know this to be a hyperelliptic curve over $\BQ$, and thus a curve of genus $g\geq 2$. Also, if we let $D_n = p_1^{\epsilon_1}\cdots p_k^{\epsilon_k}$, we can consider the quadratic twist
$$C_{b, D_n} : D_n Y^2 =  X^5-b.$$

However, any point $(x,y)$ on this new curve would give
\begin{align*}
D_n y^2 &=  x^5-b\\
(\sqrt{D_n} y)^2 &= x^5 -b
\end{align*}
and so simply amounts to a point on $C_b\left(\BQ\left(\sqrt{D_n}\right)\right)$.

From above, we see that every $n\equiv 0 \bmod{5}$ gives a solution to the curve $C_{b, D_n}$. Since the $X$ coordinate of those points are distinct, it is clear that the points are distinct. As $n$ varies over multiples of 5 between $0$ and $N$, we get $\left\lfloor \frac{N}{5}\right\rfloor$ distinct solutions to at most $2^k$ different curves. It follows that one of these curves has at least $\frac{N}{5\cdot 2^{k}}$ solutions.

Consider the ``naïve'' multiplicative height function on $C_{b,D_n}$ given by $H\left(P\right) = \max\{\abs{x}, \abs{d}\}$ where $P=\left(\frac{x}{d^2},\frac{y}{d^3}\right)$ with $x,y$ and $d$ integers, and $(x,d)=(y,d)=1$.

Then, note that all the solutions produced above for the curves $C_{b,D_n}$ have height at most $a^N$. We then apply Mumford's theorem with this height function to conclude that
\begin{align*}
\#\left\{P\in C_{b,D_n}(\BQ) : H(P) \leq a^N\right\} &\leq c \log\log a^N.\\
\intertext{By the previous comment on quadratic twists,}
\#\left\{P\in C_{b}\left(\BQ\left(\sqrt{D_n}\right)\right) : H(P) \leq a^N\right\} &\leq c \log\log a^N.
\end{align*}
Note that our previous comment about the independence of the constant on the field in Mumford's theorem allows us to have the constant $c$ here be independent of $n$. Hence, by the above
\begin{align*}
\frac{N}{5\cdot 2^{k}} &\leq c \log\log a^N = c \log N + c\log \log a.\\
2^k &\geq \frac{c^\prime N}{\log N + \log\log a}.
\end{align*}

Therefore,
$$ k \gg \log N \gg \log \log x.$$
\end{proof}

We want to point out that even if all three proofs give bounds of the same order of magnitude with respect to $x$, the dependence of the implied constants on $a$ and $b$ vary for each approach. For example, the proof in section \ref{sec:BS} reduces the dependence on $b$ dramatically. Note also that the dependence on $b$ of the implicit constant in section \ref{sec:mgp} is harder to make explicit as the constant given from Mumford's theorem depends on $b$. However, we see that the proof of section \ref{sec:BS} requires an extra condition on $a$ and $b$ to use Theorem \ref{thm:B-S}, albeit a mild one.

In any case, as all three proofs use ideas fundamentally different from each other, we consider that they are of independent interest.

\section{Second order recurrence sequences}
In \cite{MoSt}, Moree and Stevenhagen actually consider the two-variable problem with $a$ and $b$ rational numbers (and then disregard the finitely many primes dividing their numerators or denominators). Here, for clarity, we restricted our attention to integers. However, it is not very hard to retrieve our results in the case where $a$ and $b$ are rational numbers.

Write $a=\frac{a_1}{a_2}$ and $b = \frac{b_1}{b_2}$ with $\gcd(a_1,a_2)=\gcd(b_1,b_2)=1$. Then, the set of primes we are interested in counting
$$S_x = \left\{p \leq x\text{ prime }: b\bmod{p} \in \langle a \bmod{p} \rangle \subset \BF_p^\ast \right\}$$
can be written as
$$S_x = \{p \leq x \text{ prime }: p| (b_2a_1^n-b_1a_2^n) \text{ for some }n\}.$$
The sequence $(b_2a_1^n-b_1a_2^n)$ is a linear recurrence sequence of order 2 and so we may again apply the theorem of Stewart.

For the proof of section \ref{sec:mgp}, it is also easy to generalize the argument. Indeed, following the same notation, we can write for $n\equiv 0 \bmod{10}$
\begin{align*}
b_2a_1^n - b_1a_2^n = p_1^{2j_1+\epsilon_1} \cdots p_k^{2j_k+\epsilon_k}\\
\left(p_1^{\epsilon_1} \cdots p_k^{\epsilon_k}\right)\left(\frac{p_1^{j_1} \cdots p_k^{j_k}}{a_1^{n/2}}\right)^2 = b_2\left(\frac{a_1^{n/5}}{a_2^{n/5}}\right)^{5} - b_1
\end{align*}
which gives the rational solution $\left(\frac{a_1^{n/5}}{a_2^{n/5}},\frac{p_1^{j_1} \cdots p_k^{j_k}}{a_1^{n/2}}\right)$ to the hyperelliptic curve $D_nY^2=b_2X^5-b_1$. Since Mumford's theorem consider any rational solutions, and since the height of these solutions is again at most $\max\{\abs{a_1^{N}},\abs{a_2^{N}}\} \sim x$, the rest of the proof goes through unchanged.

The proof in section \ref{sec:BS} is trickier to generalize. Indeed, the result from Bombieri and Schmidt we use considers only integral solutions to the Thue equation. However, similarly to what we did above, we would need here a bound on the number of $S$-integer solutions to the Thue equation.

\section{Proof of Theorem \ref{thmdis}} \label{sec:thmdis}
This proof mainly relies on the following theorem of Gupta and Murty.
\begin{theorem}[\cite{GuMu}]
Fix $a,b$ coprime integers. There exists a constant $c>0$ such that
$$\#\left\{p\leq x \text{ prime } : p-1 \in 2P_2 \text{ and } \left(\frac{a}{p}\right) = \left(\frac{b}{p}\right) = -1\right\} \geq \frac{c x}{(\log x)^2}$$
where $P_2$ is the set of numbers either prime or product of two primes.
\end{theorem}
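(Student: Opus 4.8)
The plan is to treat this as a sieve problem, extracting the almost-prime condition on $(p-1)/2$ after first disposing of the quadratic conditions by congruences. First I would reduce the two Legendre-symbol constraints to a splitting condition. By quadratic reciprocity and the supplementary laws, for fixed $a$ the value $\left(\frac{a}{p}\right)$ depends only on $p$ modulo $4\abs{a}$, and likewise $\left(\frac{b}{p}\right)$ depends only on $p$ modulo $4\abs{b}$. Setting $D = 4\abs{ab}$ (which, since $(a,b)=1$, is the least common such modulus), the condition $\left(\frac{a}{p}\right) = \left(\frac{b}{p}\right) = -1$ becomes the requirement that $p$ lie in a fixed set $R$ of residue classes modulo $D$, namely those whose Frobenius in $\Gal(\BQ(\sqrt a,\sqrt b)/\BQ)$ is nontrivial on both $\sqrt a$ and $\sqrt b$. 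Provided $a$, $b$ and $ab$ are non-squares — which holds in the intended application — this field has degree $4$ and $\abs{R} = \phi(D)/4 > 0$; the degenerate cases (one of $a,b,ab$ a square) either empty the set or shrink $D$ and are handled directly. Thus it suffices to count primes $p \le x$ with $p \equiv r \pmod D$ for some $r \in R$ and $(p-1)/2 \in P_2$, and to show this count is $\gg x/(\log x)^2$.

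Next I would run a lower-bound sieve on the integers $m = (p-1)/2$ as $p$ ranges over primes $\le x$ in the admissible classes. For a squarefree modulus $d$ coprime to $2D$, the condition $d \mid m$ amounts to $p \equiv 1 \pmod{2d}$, so the relevant count is of primes in a single class modulo $\mathrm{lcm}(2d,D)$, with main term $\frac{\pi(x)}{\phi(\mathrm{lcm}(2d,D))}$; each prime $\ell \nmid 2D$ thus removes a proportion $\approx 1/\ell$, making this a linear (dimension one) sieve. The decisive input is the level of distribution, supplied by the Bombieri–Vinogradov theorem: after fixing $D$, the bound on $\sum_{k \le x^{1/2-\varepsilon}} \max_{c}\abs{\pi(x;k,c) - \pi(x)/\phi(k)}$ gives level $x^{1/2-\varepsilon}$ for the sieving variable $d$, with total error of smaller order than the main term $\asymp x/(\log x)^2$.

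With level of distribution $1/2$ in hand, a plain Jurkat–Richert lower-bound sieve removing prime factors up to $x^{1/4}$ would only guarantee that $m = (p-1)/2 \sim x$ has at most three prime factors. To reach $P_2$ I would instead use a Chen-type weighted sieve, attaching weights that subtract off the contribution of those $m$ having three prime factors each exceeding $x^{1/4}$; a switching argument bounds this subtracted quantity, again through Bombieri–Vinogradov, so the weighted lower bound remains positive. This yields $\gg x/(\log x)^2$ primes $p \le x$ with $p \equiv r \pmod D$ and $(p-1)/2 \in P_2$, and summing over the $\abs{R}$ admissible classes completes the count; the order $x/(\log x)^2$ is exactly what one expects, since the primes in a fixed class already number $\asymp x/\log x$ and the almost-prime condition costs a further factor $\asymp 1/\log x$.

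The main obstacle is this sieve step, and specifically the fact that level of distribution $1/2$ is the borderline value at which $P_2$ — but not the prime condition $(p-1)/2 \in P_1$ — is attainable; this is precisely why the statement permits $(p-1)/2$ to be a product of two primes rather than prime. Making the weighted lower bound strictly positive with an explicitly positive constant, which is the delicate combinatorial accounting at the heart of Chen's method, is where the real work lies, whereas the reduction to congruences and the invocation of Bombieri–Vinogradov are comparatively routine. An unconditional improvement to $P_1$ would require a level of distribution exceeding $1/2$, which is not presently available.
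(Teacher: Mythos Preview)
The paper does not prove this theorem; it is quoted from \cite{GuMu} (Gupta and Murty, \emph{Invent.\ Math.}\ \textbf{78}, 1984) and used as a black box in the proof of Theorem~\ref{thmdis}. There is therefore no ``paper's own proof'' to compare against.

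Your sketch is nonetheless a fair outline of how the Gupta--Murty result is obtained: reduce the Legendre-symbol constraints to a union of residue classes modulo $4\abs{ab}$ via reciprocity, then run a linear lower-bound sieve on $(p-1)/2$ with Bombieri--Vinogradov supplying level of distribution $x^{1/2-\varepsilon}$, and invoke a Chen-type weighted/switching argument to push the output from $P_3$ down to $P_2$. Two small remarks. First, your parenthetical about the degenerate cases is actually load-bearing: as stated in the paper the hypothesis is merely $(a,b)=1$, but if any of $a$, $b$, $ab$ is a perfect square the set is empty or collapses, so the non-square assumption must be made explicit (and is in the original). Second, summing over the classes $r\in R$ at the end is unnecessary---a single admissible class already gives $\gg x/(\log x)^2$---but this is harmless.
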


\begin{proof}[Proof of Theorem \ref{thmdis}]

We start by considering only the primes in the set
$$T_x = \left\{p\leq x \text{ prime } : p-1 \in 2P_2 \text{ and } \left(\frac{a}{p}\right) = \left(\frac{b}{p}\right) = -1\right\}$$
and ask how many of them are also in our set of interest
\begin{align*}
S^{\prime}_x = \left\{p \leq x\text{ prime }: \begin{array}{c} b\bmod{p} \in \langle a \bmod{p} \rangle \subset \BF_p^\ast \\ \text{or } \langle b \bmod{p} \rangle = \BF_p^\ast \end{array} \right\}.
\end{align*}

Let $p\in T_x$, and let $f_p(a)$ and $f_p(b)$ denote the order of $a$ and $b$ respectively in $\BF_p^\times$. Since $a$ and $b$ are not squares modulo $p$, it follows that 2 divides $f_p(a)$ and $f_p(b)$. From the definition of $T_x$, either $p-1 = 2q_1$ or $p-1 = 2q_1q_2$ with $q_1, q_2$ primes.

\emph{Case 1} Suppose $p-1 = 2q_1$. Since $f_p(a) \neq 2$, then $f_p(a) = 2q_1$ and so $a$ is a primitive root for $\BF^\times_p$. $p$ is therefore trivially in $S_x^\prime$.

\emph{Case 2} Suppose $p-1 = 2q_1q_2$, with $q_1<q_2$. There are three possibilities.

\emph{Case 2.1} $f_p(a) = 2q_1q_2$. Then, $a$ is a primitive root modulo $p$.

\emph{Case 2.2} $f_p(a) = 2q_2$.

\emph{Case 2.3} $f_p(a) = 2q_1$. We now show that this case does not happen too often. Here, clearly, $q_1 < \sqrt{x}$. We then count the number of $p\in T_x$ that can yield this situation. We do so by splitting the range of the possible $q_1$.

\emph{Case 2.3a} Suppose that $q_1 < \frac{\sqrt{x}}{\log x}$. Then if $p$ yields this case, $p$ divides $a^{2q_1} -1$ and the number of such primes when ranging over possible $q_1$ is at most
\begin{align*}
\sum_{q_1 < \sqrt{x}/\log x} \frac{2q_1}{\log x} \ll \frac{x}{(\log x)^3}
\end{align*}

\emph{Case 2.3b} Suppose that $ \frac{\sqrt{x}}{\log x} \leq q_1 < \sqrt{x}$. If $p$ yields this case, then we know that $\frac{p-1}{2q_1}$ has no small prime factor (in particular is equal to $q_2$). By a theorem of Bombieri, Friedlander and Iwaniec \cite{BoFrIw}, we know that for fixed $q_1< \sqrt{x}$,
$$\#\left\{ p\leq x \text{ prime } : \frac{p-1}{2q_1} \text{ has no small prime factors}\right\} \ll \frac{x}{q_1 (\log x)^2}.$$
Thus, summing over all possible $q_1$ in the range, we get that the number of $p$ that can yield this case is at most

$$\frac{x}{(\log x)^2} \sum_{\frac{\sqrt{x}}{\log x} \leq q_1 < \sqrt{x}} \frac{1}{q_1}.$$
Since we know that $\sum_{p<x} \frac{1}{p} = \log \log x + c +O\left(\frac{1}{\log x}\right)$, we get
\begin{align*}
\sum_{\frac{\sqrt{x}}{\log x} \leq q_1 < \sqrt{x}} \frac{1}{q_1} &= \log\log \sqrt{x} - \log\log\frac{\sqrt{x}}{\log x} + O\left(\frac{1}{\log x}\right)\\
&= \log \left(\frac{\frac{1}{2} \log x}{\frac{1}{2} \log x - \log\log x}\right) O\left(\frac{1}{\log x}\right)\\
&= -\log \left(1-\frac{2\log\log x}{\log x}\right) + O\left(\frac{1}{\log x}\right).
\intertext{For $x$ large enough, $\frac{2\log\log x}{\log x}$ is small, and for small $y$, $-\log(1-y) \sim y$. We then get}
\sum_{\frac{\sqrt{x}}{\log x} \leq q_1 < \sqrt{x}} \frac{1}{q_1} &\ll \frac{\log\log x}{\log x}.
\end{align*}
Therefore
$$\frac{x}{(\log x)^2} \sum_{\frac{\sqrt{x}}{\log x} \leq q_1 < \sqrt{x}} \frac{1}{q_1}\ll \frac{x \log\log x}{(\log x)^3}.$$

From the bounds we get in cases 2.3a and 2.3b, we conclude that the number of primes $p$ in $T_x$ yielding the case 2.3 is negligible compared to the total number of primes in $T_x$, which is at least $\frac{c x}{(\log x)^2}$. We thus have that
$$\left|\left\{ p\in T_x : a \text{ is a primitive root} \bmod{p} \text{ or } f_p(a) = 2q_2 \right\}\right| \gg \frac{x}{(\log x)^2}.$$
We can repeat the whole argument for $b$ instead of $a$ with $T_x$ replaced with the set above. We then get
$$\left|\left\{ p\leq x \text{ prime }: \begin{array}{c} a \text{ is a primitive root} \bmod{p} \text{ or } f_p(a) = 2q_2 \text{ and}\\b \text{ is a primitive root} \bmod{p} \text{ or } f_p(b) = 2q_2\end{array} \right\} \right| \gg \frac{x}{(\log x)^2}.$$

Now, if either $a$ or $b$ is a primitive root modulo $p$, then $p\in S^\prime_x$. Also, if $f_p(a) = f_p(b) = 2q_2$, then $\langle b \rangle = \langle a \rangle$ and so $p\in S^\prime_x$ as well.

We thus conclude that $\left|S^\prime_x\right| \gg \frac{x}{(\log x)^2}$ as desired.
\end{proof}

\section{Concluding remarks}
The original Artin conjecture was proved conditionally on the generalized Riemann hypothesis by Hooley (\cite{Ho}). The two-variable Artin conjecture was also proved conditionally on the generalized Riemann hypothesis by Moree and Stevenhagen (\cite{MoSt}). However, Theorem \ref{thmdis} suggests that we might not need the generalized Riemann hypothesis to show that at least one of them is true.

\section{Acknowledgments}
We would like to thank Professors Pieter Moree, Damien Roy and Peter Stevenhagen for helpful comments on a previous version of this paper.

\end{document}